\newcommand{\rot}{\mathtt{root}}
\newcommand{\Chi}{\mathtt{Chi}}
\newcommand{\pa}{\mathtt{par}}
\newcommand{\Des}{\mathtt{Des}}
\newcommand{\Sl}{S_{\boldsymbol{\lambda}}}
\begin{document}
\title[Aluthge transforms of weighted shifts on directed trees]{Aluthge transforms of weighted shifts on directed trees}
\author[J.\ Trepkowski]{Jacek Trepkowski}
\address{Wydzia{\l} Matematyki i Informatyki,
Uniwersytet Jagiello\'nski, ul.\ {\L}ojasiewicza~6,
PL-30348 Krak\'ow}
\email{Jacek.Trepkowski@im.uj.edu.pl}
\subjclass[2010]{Primary 47B37; Secondary
47B20, 47B33}
\keywords{Aluthge transform, weighted shift on a directed tree, hyponormal operator, composition operator on an $L^2$-space, polar decomposition}

\numberwithin{equation}{section}
\newtheorem{thm}{Theorem}[section]
\newtheorem{lem}[thm]{Lemma}
\newtheorem{prop}[thm]{Proposition}
\newtheorem{cor}[thm]{Corollary}
\newtheorem{ob}[thm]{Observation}
\theoremstyle{definition}
\newtheorem{rem}[thm]{Remark}
\theoremstyle{definition}
\newtheorem{ex}[thm]{Example}
\theoremstyle{definition}
\newtheorem{df}[thm]{Definition}

\begin{abstract}
 Aluthge transform of a bounded operator is generalized to the case of unbounded one. A formula for the Aluthge transform of a weighted shift on a directed tree is established and it is used to construct an example of a hyponormal operator whose Aluthge transform has trivial domain. It is proven that such an example can be also constructed in the class of composition operators. It is also shown that Aluthge transform of a closed, densely defined operator is not necessarily closable.
\end{abstract}
\maketitle

\section{Introduction}

 Aluthge transform of a bounded operator $T$, introduced by Aluthge in \cite{al}, is given by the formula $\widetilde{T}=|T|^{\frac12}U|T|^{\frac12}$, where $T=U|T|$ is the polar decomposition of $T$. It turned out to have many applications, e.g. in the invariant subspace problem (cf. \cite{jkp1}). One of the most important properties of the Aluthge transform is that it transforms a $p$-hyponormal operator into a $(p+\frac12)$-hyponormal one, preserving its spectrum (cf. \cite{al}, \cite{hu}). Moreover, under some conditions, the sequence $\{\widetilde{T}^{(n)}\}$ of consecutive iterations of Aluthge transform is convergent to a normal operator (cf. \cite{jkp2}). Aluthge trasforms of operators were studied also in \cite{cjl}, \cite{fjkp}, \cite{jjs3}, \cite{jkp3}, \cite{rion}.

 A natural question is which of the above mentioned properties remain true if one considers a closed, densely defined operator $T$, which is not necessarily bounded. In this paper it is shown that Aluthge transform of such an operator may have trivial domain and need not be necessarily closed or even closable. Thus the sequence $\{\widetilde{T}^{(n)}\}$ cannot be defined. What is interesting, $\widetilde{T}$ may have trivial domain even if $T$ is a hyponormal operator, which implies in particular that Aluthge transform does not preserve hyponormality in the unbounded case. An example of such a hyponormal operator is given in this paper in the class of weighted shifts on directed trees. The construction of the example is preceded by a discussion on Aluthge transform for this class of operators.

 What is important, the directed tree used in the construction is rootless and therefore the operator in question is unitarily equivalent to a composition operator. In turn, an example of an operator whose Aluthge transform is not closable can be realized as the adjoint of a composition operator.

 Since most of the properties of Aluthge transform is preserved if one replaces $\frac12$ in its definition by any other exponents that sum up to 1 (cf. \cite{al2}, \cite{hu}), in this paper $t$-Aluthge transform is considered for any $t\in(0,1]$, according to the following definition:

 \begin{df}
  Let $T$ be a closed, densely defined operator in a Hilbert space $\mathcal{H}$, let $T=U|T|$ be its polar decomposition and let $t\in(0,1]$. Then \emph{$t$-Aluthge transform} of $T$ is given by the formula
  $\Delta_t(T)=|T|^tU|T|^{1-t}.$
 \end{df}

\section{Preliminaries}
 In what follows $\mathbb{Z}$ will denote the set of all integers and $\mathbb{Z}_+ = \{0,1,2,\ldots\}$. For any set $A$ the cardinality of $A$ will be denoted by $\# A$.

 Let $T$ be any operator in a complex Hilbert space $\mathfrak{H}$. Then $\mathcal{D}(T)$, $\mathcal{N}(T)$, $\mathcal{R}(T)$ denote the domain, the null space and the range of $T$, respectively. For any linear subspace $W$ of $\mathcal{D}(T)$ we denote by $T\restriction_W$ the restriction of $T$ to the subspace $W$. Let $\Gamma(T)\subset \mathcal{H}\times\mathcal{H}$ be the graph of $T$. If the closure of $\Gamma(T)$ in the product topology is a graph of an operator, we call this operator the closure of $T$ and denote by $\overline{T}$.

 A densely defined operator $T$ is called \emph{hyponormal}, if $\mathcal{D}(T)\subset\mathcal{D}(T^*)$ and $\|Tf\|\geq\|T^*f\|$ for every $f\in\mathcal{D}(T)$.
 \bigskip

 Let $\mathfrak{T}=(V,E)$ be a directed tree (i.e. $V$  and $E$ are the sets of vertices and edges, respectively). If $\mathfrak{T}$ has a root, we denote it by $\mathtt{root}$ and we set $V^\circ=V\setminus\{\mathtt{root}\}$. Otherwise, we set $V^\circ=V$. For any vertex $u\in V$ we put $\Chi(u)=\{v\in V\,:\,(u,v)\in E\}$. If $v\in V^\circ$, than by $\pa(v)$ we denote the only vertex $u\in V$ such that $v\in\Chi(u)$.
 \bigskip

 By $\ell^2(V)$ we understand the complex Hilbert space of functions $f:V\rightarrow \mathbb{C}$ such that $\sum_{v\in V} |f(v)|^2<\infty$, with inner product $\left<f,g\right> = \sum_{v\in V} f(v)\overline{g(v)}$, $f,g\in \ell^2(V)$. For any $u\in V$ we define $e_u\in\ell^2(V)$ as follows:
 $$e_u(v) = \begin{cases}
             1, & u=v\\
             0, & u\neq v
            \end{cases}.$$
 Obviously, $\{e_u\}_{u\in V}$ is an orthonormal basis of $\ell^2(V)$. We denote by $\mathcal{E}_V$ the linear span of $\{e_u\}_{u\in V}$.
\bigskip

 For any system $\boldsymbol{\lambda}=\{\lambda_v\}_{v\in V^\circ}$ we define operator $\Sl$ in $\ell^2(V)$ by
 \begin{equation}
  \mathcal{D}(\Sl) = \left\{ f\in\ell^2(V)\,:\,\sum_{u\in V} \left(\sum_{v\in\Chi(u)}|\lambda_v|^2\right)|f(u)|^2  <\infty\right\},
  \label{eq:DSl}
 \end{equation}
 \begin{equation}
  (\Sl f)(v) = \begin{cases}
                             \lambda_v f(\pa(v)), & v\in V^\circ\\
                             0					, & v=\rot
                        \end{cases},\qquad f\in\mathcal{D}(\Sl).
  \label{eq:Sl}
 \end{equation}
 The operator $\Sl$ is called the \emph{weighted shift} on the directed tree $\mathfrak{T}$ with the system of weights $\boldsymbol{\lambda}$.

 For any $\boldsymbol{\lambda}=\{\lambda_u\}_{u\in V^\circ}$ we will use the following notations:
 $V_{\boldsymbol{\lambda}}^+ :=\{u\in V\,:\,\Sl e_u \neq 0\}$,
 $\Chi_{\boldsymbol{\lambda}}^+(u) := \Chi(u)\cap V_{\boldsymbol{\lambda}}^+$ for any $u\in V$ and, if $U$ is any subset of $V$, then $\Chi(U):= \bigcup_{u\in U} \Chi(u)$. We also use notatins $\Chi^2(u):=\Chi(\Chi(u))$ and $\pa^2(u)=\pa(\pa(u))$.

Recall some useful properties of weighted shifts.

\begin{prop}[cf. {\cite[Propositions 3.1.2 and 3.1.3]{jjs}}]
 Let $\Sl$ be a weighted shift on a directed tree $\mathfrak{T}=(V,E)$. Then the following assertions hold:
 \begin{itemize}
  \item[(i)] $\Sl$ is a closed operator,
  \item[(ii)] $e_u\in\mathcal{D}(\Sl)$ if and only if $\sum_{v\in\Chi(u)}|\lambda_v|^2<\infty$ and in this case
   \begin{equation}
    \Sl e_u = \sum_{v\in\Chi(u)} \lambda_v e_v,\qquad \|\Sl e_u\|^2 = \sum_{v\in\Chi(u)} |\lambda_v|^2,
    \label{eq:Sleu}
   \end{equation}
  \item[(iii)] $\Sl$ is densely defined if and only if $e_u\in\mathcal{D}(\Sl)$ for every $u\in V$.
 \end{itemize}
 \label{prop:Sl}
\end{prop}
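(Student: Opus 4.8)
The plan is to verify the three assertions in order, treating (ii) as essentially computational, deducing (iii) from it, and reserving the real work for the closedness in (i).

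For (ii), I would substitute $f=e_u$ into the defining condition \eqref{eq:DSl}. Since $e_u(w)=0$ unless $w=u$, the only surviving term of the outer sum is the one indexed by $u$, so that $\sum_{w\in V}\big(\sum_{v\in\Chi(w)}|\lambda_v|^2\big)|e_u(w)|^2=\sum_{v\in\Chi(u)}|\lambda_v|^2$; hence $e_u\in\mathcal{D}(\Sl)$ exactly when this last sum is finite. Assuming finiteness, I read off the values of $\Sl e_u$ from \eqref{eq:Sl}: for $v\in V^\circ$ one has $(\Sl e_u)(v)=\lambda_v e_u(\pa(v))$, which equals $\lambda_v$ when $\pa(v)=u$ (i.e. $v\in\Chi(u)$) and $0$ otherwise, giving $\Sl e_u=\sum_{v\in\Chi(u)}\lambda_v e_v$; the norm identity then follows from orthonormality of $\{e_v\}_{v\in V}$.

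For (iii), the forward implication is immediate: if each $e_u\in\mathcal{D}(\Sl)$ then $\mathcal{E}_V\subseteq\mathcal{D}(\Sl)$, and since $\mathcal{E}_V$ is dense in $\ell^2(V)$, so is $\mathcal{D}(\Sl)$. For the converse I would argue by contraposition: suppose $e_u\notin\mathcal{D}(\Sl)$ for some $u$, i.e. $\sum_{v\in\Chi(u)}|\lambda_v|^2=\infty$ by (ii). Then for any $f\in\mathcal{D}(\Sl)$ the single summand indexed by $u$ in \eqref{eq:DSl}, namely $\big(\sum_{v\in\Chi(u)}|\lambda_v|^2\big)|f(u)|^2$, must be finite, forcing $f(u)=0$. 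Thus $\mathcal{D}(\Sl)\subseteq\{e_u\}^{\perp}$, a proper closed subspace, so $\Sl$ is not densely defined.

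The substantive part is (i). I would take sequences $f_n\in\mathcal{D}(\Sl)$ with $f_n\to f$ and $\Sl f_n\to g$ in $\ell^2(V)$, and show $f\in\mathcal{D}(\Sl)$ with $\Sl f=g$. The first step passes to pointwise convergence: since norm convergence in $\ell^2(V)$ implies coordinatewise convergence (because $|h(w)|\leq\|h\|$), we get $f_n(w)\to f(w)$ and $(\Sl f_n)(w)\to g(w)$ for each $w\in V$. Applying \eqref{eq:Sl} to $f_n$ and letting $n\to\infty$ identifies $g$ with the formal image of $f$, i.e. $g(v)=\lambda_v f(\pa(v))$ for $v\in V^\circ$ and $g(\rot)=0$. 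The crucial step is then to reorganize $\|g\|^2=\sum_{v\in V^\circ}|\lambda_v|^2|f(\pa(v))|^2$ by grouping summands according to their parent: since $\{\Chi(u)\}_{u\in V}$ partitions $V^\circ$ and $\pa(v)=u$ for $v\in\Chi(u)$, Tonelli's theorem (all terms being nonnegative) yields $\|g\|^2=\sum_{u\in V}\big(\sum_{v\in\Chi(u)}|\lambda_v|^2\big)|f(u)|^2$, which is precisely the quantity appearing in \eqref{eq:DSl}. As $g\in\ell^2(V)$ this sum is finite, so $f\in\mathcal{D}(\Sl)$, and the pointwise formula then gives $\Sl f=g$. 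I expect the main obstacle to be exactly this regrouping: one must recognize the norm of the prospective image as the weighted sum defining the domain, which hinges on the children sets partitioning $V^\circ$ so that no weight is counted twice, and on handling the unbounded situation through monotone (rather than dominated) rearrangement.
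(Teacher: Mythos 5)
Your proof is correct and complete, and there is nothing in the paper to compare it against: the proposition is recalled verbatim from \cite{jjs} with a ``cf.''\ citation and no proof is given in this paper. Your argument is the standard one (and essentially the one in \cite{jjs}): (ii) by direct substitution of $e_u$ into \eqref{eq:DSl} and \eqref{eq:Sl}, (iii) forward via density of $\mathcal{E}_V$ and backward via the observation that $\mathcal{D}(\Sl)\subseteq\{e_u\}^{\perp}$ when $\sum_{v\in\Chi(u)}|\lambda_v|^2=\infty$, and (i) via coordinatewise convergence plus the regrouping of $\sum_{v\in V^\circ}|\lambda_v|^2|f(\pa(v))|^2$ over the partition $V^\circ=\bigcup_{u\in V}\Chi(u)$. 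That regrouping identity, which you correctly flag as the crux of closedness, is the same one this paper uses inside its proof of Lemma \ref{lem:core}, so your argument is fully consistent with the techniques employed there.
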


\begin{lem}
 Let $\Sl$ be a weighted shift on $\mathfrak{T}=(V,E)$. Then $\mathcal{E}:=\mathcal{E}_V\cap\mathcal{D}(\Sl)$ is a core for $\Sl$, i.e. $\overline{\Sl\restriction_{\mathcal{E}}}=\Sl$.
 \label{lem:core}
\end{lem}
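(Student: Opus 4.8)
The plan is to exploit that $\Sl$ is already closed (Proposition~\ref{prop:Sl}(i)), so that proving $\overline{\Sl\restriction_{\mathcal{E}}}=\Sl$ reduces to two inclusions. The inclusion $\overline{\Sl\restriction_{\mathcal{E}}}\subseteq\Sl$ is immediate: since $\Sl\restriction_{\mathcal{E}}\subseteq\Sl$ and $\Sl$ is closed, passing to closures gives $\overline{\Sl\restriction_{\mathcal{E}}}\subseteq\overline{\Sl}=\Sl$. The real content is the reverse inclusion, which I would obtain by showing that every $f\in\mathcal{D}(\Sl)$ is a graph-norm limit of a sequence $\{f_n\}\subseteq\mathcal{E}$, that is, $\|f-f_n\|\to0$ and $\|\Sl f-\Sl f_n\|\to0$.

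First I would fix $f\in\mathcal{D}(\Sl)$ and use finite truncations as approximants. Because $f\in\ell^2(V)$, its support $\{u\in V:f(u)\neq0\}$ is at most countable; enumerating it as $u_1,u_2,\ldots$ I set $f_n=\sum_{k=1}^{n}f(u_k)e_{u_k}$. Each $f_n$ is visibly in $\mathcal{E}_V$, and it also lies in $\mathcal{D}(\Sl)$: the defining sum in \eqref{eq:DSl} for $f_n$ collapses to the finite sum $\sum_{k=1}^{n}\bigl(\sum_{v\in\Chi(u_k)}|\lambda_v|^2\bigr)|f(u_k)|^2$, each term of which is finite because $f\in\mathcal{D}(\Sl)$ forces $\sum_{v\in\Chi(u_k)}|\lambda_v|^2<\infty$ at every vertex where $f$ does not vanish. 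Hence $f_n\in\mathcal{E}$.

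The crux is the graph-norm estimate, and the key identity is
\[
 \|\Sl g\|^2=\sum_{u\in V}\Bigl(\sum_{v\in\Chi(u)}|\lambda_v|^2\Bigr)|g(u)|^2,\qquad g\in\mathcal{D}(\Sl),
\]
obtained from \eqref{eq:Sl} by writing $\|\Sl g\|^2=\sum_{v\in V^\circ}|\lambda_v|^2|g(\pa(v))|^2$ and reindexing the sum over $v\in V^\circ$ according to the partition $V^\circ=\bigsqcup_{u\in V}\Chi(u)$ induced by the parent map. Applying this to $g=f-f_n$, which vanishes on $\{u_1,\ldots,u_n\}$ and equals $f$ elsewhere, gives $\|\Sl(f-f_n)\|^2=\sum_{k>n}\bigl(\sum_{v\in\Chi(u_k)}|\lambda_v|^2\bigr)|f(u_k)|^2$, a tail of the convergent series $\|\Sl f\|^2$ and hence tending to $0$; likewise $\|f-f_n\|^2=\sum_{k>n}|f(u_k)|^2\to0$. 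Thus $f_n\to f$ and $\Sl f_n\to\Sl f$, so $f\in\mathcal{D}(\overline{\Sl\restriction_{\mathcal{E}}})$ with $\overline{\Sl\restriction_{\mathcal{E}}}f=\Sl f$, which yields $\Sl\subseteq\overline{\Sl\restriction_{\mathcal{E}}}$ and finishes the argument.

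I expect the only genuinely delicate point to be the reindexing behind the displayed identity—checking that the parent map furnishes an honest disjoint decomposition of $V^\circ$, so that every edge is counted exactly once—after which the remaining estimates are a routine tail-of-a-convergent-series computation.
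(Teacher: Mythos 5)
Your proposal is correct and follows essentially the same route as the paper: finite truncations $f_n$ of $f$ supported on its (countable) support, the domination argument showing $f_n\in\mathcal{E}$, and the reindexing of $\sum_{v\in V^\circ}$ over the disjoint partition $V^\circ=\bigsqcup_{u\in V}\Chi(u)$ to exhibit $\|\Sl(f-f_n)\|^2$ as a tail of a convergent series. The only cosmetic difference is that you make explicit the reduction via closedness of $\Sl$ (the inclusion $\overline{\Sl\restriction_{\mathcal{E}}}\subseteq\Sl$), which the paper leaves implicit.
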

\begin{proof}

 Let $f\in\mathcal{D}(\Sl)$ and let
  $U:=\{u\in V\,:\, f(u)\neq 0\}$. Since $f\in\ell^2(V)$, the set $U$ is at most countable. If $U$ is finite, then $f\in \mathcal{E}_V$. Otherwise, set $U=\{u_1,u_2,\ldots\}$ and
  \begin{equation}
   f_n := \sum_{j=1}^n f(u_j) e_{u_j}, \qquad n=1,2,\ldots
   \label{eq:fn}
  \end{equation}
  Obviously, $f_n\in \mathcal{E}_V$. Since $f\in\mathcal{D}(\Sl)$, for every $n$ we have
  \begin{equation*}
   \sum_{u\in V} \left(\sum_{v\in\Chi(u)} |\lambda_v|^2 \right) |f_n(u)|^2 \leq
   \sum_{u\in V} \left(\sum_{v\in\Chi(u)} |\lambda_v|^2 \right) |f(u)|^2<\infty
  \end{equation*}
  and therefore $f_n\in\mathcal{D}(\Sl)$ and hence also $f_n\in\mathcal{E}$.

  Using Parseval's identity, we get
  \begin{align*}
   \|f-f_n\|^2 = \sum_{u\in V} |\left< f-f_n,e_u\right>|^2= \sum_{u\in V} |f(u)-f_n(u)|^2 =\\
   = \sum_{u\in U} |f(u)-f_n(u)|^2 = \sum_{j=n+1}^\infty |f(u_j)|^2 \stackrel{n\to\infty}{\longrightarrow} 0,
  \end{align*}
  because the series $\sum_{j=1}^\infty |f(u_j)|^2$ is convergent.

  It remains only to show that $\Sl f_n \to \Sl f$. Using (\ref{eq:fn}) and (\ref{eq:Sleu}), we have
  \begin{equation*}
   \Sl f_n = \Sl \sum_{j=1}^n f(u_j)e_{u_j} =
   \sum_{j=1}^n f(u_j) \left(\sum_{v\in\Chi(u_j)} \lambda_v e_v\right) .
  \end{equation*}
  Using Parseval's identity again, from (\ref{eq:Sl}) we get
  \begin{align*}
   \allowdisplaybreaks
   \|\Sl (f-f_n)\|^2 &= \sum_{v\in V} |\left<\Sl (f-f_n),e_v\right>|^2=\\
   &= \sum_{v\in V} |(\Sl f)(v)-(\Sl f_n)(v)|^2 =\\
   &= \sum_{v\in V^\circ} |\lambda_v|^2 |f(\pa(v))-f_n(\pa(v))|^2 =\\
   &= \sum_{u\in V} \sum_{v\in\Chi(u)} |\lambda_v|^2 |f(u)-f_n(u)|^2 =\\
   &= \sum_{u\in U} |f(u)-f_n(u)|^2 \left(\sum_{v\in\Chi(u)} |\lambda_v|^2\right) = \\
   &=   \sum_{j=n+1}^\infty |f(u_j)|^2 \left(\sum_{v\in\Chi(u_j)} |\lambda_v|^2\right) \stackrel{n\to\infty}{\longrightarrow} 0,
  \end{align*}
  which completes the proof.
\end{proof}

Let us recall a criterion for hyponormality which will be used in the sequel.

\begin{thm}[cf. {\cite[Theorem 5.1.2, Remark 5.1.5]{jjs}}] Let $\Sl$ be a weighted shift with weights $\boldsymbol{\lambda}$ on a directed tree $\mathfrak{T}=(V,E)$. Then $\Sl$ is hyponormal if and only if for every $u\in V$ the following conditions hold:
 \begin{equation}
  \text{if }v\in\Chi(u)\text{ and }\|\Sl e_v\|=0\text{, then }\lambda_v=0,
   \label{eq:hyp1}
 \end{equation}
 \begin{equation}
  \sum_{v\in\Chi_{\boldsymbol{\lambda}}^+(u)} \frac{|\lambda_v|^2}{\|\Sl e_v\|^2} \leq 1.
  \label{eq:hyp}
 \end{equation}
 \label{thm:hyp}
\end{thm}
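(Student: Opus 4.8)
The plan is to translate hyponormality into a pointwise inequality at each vertex and then analyze that inequality by Cauchy--Schwarz. First I would record the two basic quadratic forms. Using \eqref{eq:Sl}, \eqref{eq:Sleu} and Parseval's identity, exactly as in the proof of Lemma~\ref{lem:core}, one computes that for every $f\in\mathcal{D}(\Sl)$
\[
\|\Sl f\|^2=\sum_{u\in V}\|\Sl e_u\|^2|f(u)|^2 ,
\]
while, writing $h(u):=\sum_{v\in\Chi(u)}\overline{\lambda_v}f(v)$, a rearrangement justified by absolute convergence gives $\left<\Sl g,f\right>=\left<g,h\right>$ for all $g\in\mathcal{D}(\Sl)$, so that $f\in\mathcal{D}(\Sl^*)$ precisely when $h\in\ell^2(V)$, and in that case $\Sl^* f=h$ and $\|\Sl^* f\|^2=\sum_{u\in V}|h(u)|^2$. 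The decisive structural fact is that, since $\mathfrak{T}$ is a tree, the sets $\{\Chi(u)\}_{u\in V}$ are pairwise disjoint with union $V^\circ$; hence each value $f(w)$ enters $\|\Sl^* f\|^2$ only through the single term indexed by $u=\pa(w)$.

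For the sufficiency direction I would assume \eqref{eq:hyp1} and \eqref{eq:hyp} and fix $f\in\mathcal{D}(\Sl)$. Condition \eqref{eq:hyp1} lets me restrict the defining sum of $h(u)$ to $\Chi_{\boldsymbol{\lambda}}^+(u)$, on which $\|\Sl e_v\|\neq0$, and then Cauchy--Schwarz applied to the factorization $\overline{\lambda_v}f(v)=\tfrac{\overline{\lambda_v}}{\|\Sl e_v\|}\cdot\|\Sl e_v\|f(v)$ together with \eqref{eq:hyp} yields the pointwise estimate
\[
|h(u)|^2\le\Big(\sum_{v\in\Chi_{\boldsymbol{\lambda}}^+(u)}\tfrac{|\lambda_v|^2}{\|\Sl e_v\|^2}\Big)\sum_{v\in\Chi(u)}\|\Sl e_v\|^2|f(v)|^2\le\sum_{v\in\Chi(u)}\|\Sl e_v\|^2|f(v)|^2 .
\]
Summing over $u$ and using the disjointness noted above, the right-hand sides add up to $\sum_{w\in V^\circ}\|\Sl e_w\|^2|f(w)|^2\le\|\Sl f\|^2$; in particular $h\in\ell^2(V)$, so $f\in\mathcal{D}(\Sl^*)$, and $\|\Sl^* f\|^2=\sum_{u}|h(u)|^2\le\|\Sl f\|^2$. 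This is exactly hyponormality.

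For the converse I would assume $\Sl$ hyponormal (hence densely defined, so $e_u\in\mathcal{D}(\Sl)$ for all $u$ by Proposition~\ref{prop:Sl}(iii)) and test the inequality $\|\Sl^* f\|\le\|\Sl f\|$ on vectors supported on the children of a fixed $u$. For any finitely supported $f$ vanishing off $\Chi(u)$ one has $f\in\mathcal{D}(\Sl)$, while $\Sl^* f=h$ is again finitely supported and, by the disjointness of child-sets, $h(u')=0$ for $u'\neq u$; hence hyponormality reduces to the single scalar inequality
\[
\Big|\sum_{v\in\Chi(u)}\overline{\lambda_v}f(v)\Big|^2\le\sum_{v\in\Chi(u)}\|\Sl e_v\|^2|f(v)|^2 .
\]
Taking $f=e_{v_0}$ for a child $v_0$ with $\|\Sl e_{v_0}\|=0$ makes the right-hand side vanish and forces $\lambda_{v_0}=0$, which is \eqref{eq:hyp1}. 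Then, for any finite $F\subseteq\Chi_{\boldsymbol{\lambda}}^+(u)$, the optimizing choice $f(v)=\lambda_v/\|\Sl e_v\|^2$ ($v\in F$) turns the displayed inequality into $\big(\sum_{v\in F}\tfrac{|\lambda_v|^2}{\|\Sl e_v\|^2}\big)^2\le\sum_{v\in F}\tfrac{|\lambda_v|^2}{\|\Sl e_v\|^2}$, whence $\sum_{v\in F}\tfrac{|\lambda_v|^2}{\|\Sl e_v\|^2}\le1$; letting $F$ exhaust $\Chi_{\boldsymbol{\lambda}}^+(u)$ gives \eqref{eq:hyp}.

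The routine parts are the two Parseval computations and the Cauchy--Schwarz step. The part that needs genuine care is the bookkeeping with domains and convergence: justifying the rearrangement that identifies $\Sl^* f$ with $h$ (together with the absolute convergence of the underlying double sum), and, in the necessity direction, confirming that each test vector really lies in $\mathcal{D}(\Sl)$ and that its image under $\Sl^*$ is precisely the finitely supported function $h$ and nothing larger. The tree hypothesis enters exactly at the point where one wants the global form $\|\Sl^* f\|^2$ to split into independent local contributions indexed by the parents; it is this disjointness of the sets $\Chi(u)$ that legitimizes the reduction to the one-vertex inequality, so I expect it to be the conceptual crux rather than any single calculation.
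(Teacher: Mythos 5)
This theorem is never proved in the paper: it is quoted, with a ``cf.'' citation, from \cite[Theorem 5.1.2, Remark 5.1.5]{jjs}, so there is no internal proof to compare yours against and your argument has to stand on its own. It does. The identification of $\Sl^*f$ with the function $h(u)=\sum_{v\in\Chi(u)}\overline{\lambda_v}f(v)$ is precisely Proposition \ref{prop:Sl*} of the paper (you could cite it rather than rederive it), and the Parseval identity $\|\Sl f\|^2=\sum_{u\in V}\|\Sl e_u\|^2|f(u)|^2$ is the computation already carried out in the proof of Lemma \ref{lem:core}. The rest of your argument---localizing through the pairwise disjointness of the sets $\Chi(u)$, Cauchy--Schwarz for sufficiency, and the test vectors $e_{v_0}$ and $f(v)=\lambda_v/\|\Sl e_v\|^2$ on finite subsets of $\Chi_{\boldsymbol{\lambda}}^+(u)$ for necessity---is exactly the standard mechanism behind the cited result, correctly executed: the optimizing vectors do turn the local inequality into $\bigl(\sum_F\bigr)^2\le\sum_F$, and exhausting by finite $F$ is legitimate since an unordered sum is the supremum of its finite partial sums.

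One caveat you should make explicit. The theorem as stated in \cite{jjs} (and as used everywhere in this paper) carries the hypothesis that $\Sl$ is densely defined; the transcription above silently drops it. Your necessity direction correctly recovers density from hyponormality, but your sufficiency direction genuinely needs it as a hypothesis: without density $\Sl^*$ is not a well-defined (single-valued) operator, the quantities $\|\Sl e_u\|$ appearing in \eqref{eq:hyp1} and \eqref{eq:hyp} need not make sense, and one can in fact construct a shift on a tree whose root has infinitely many children, with weights arranged so that \eqref{eq:hyp1} and \eqref{eq:hyp} hold at every vertex while $e_{\rot}\notin\mathcal{D}(\Sl)$, so that ``conditions imply hyponormal'' fails under the paper's definition of hyponormality. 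With the density hypothesis restored (so that $e_u\in\mathcal{D}(\Sl)$ for every $u$ by Proposition \ref{prop:Sl}, and the defining property of $\Sl^*f$ determines it uniquely), your proof is complete and correct.
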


\section{Polar decompositions of $\Sl$ and $\Sl^*$}

  We begin by recalling the description of the polar decomposition of a weighted shift.

  \begin{prop}[cf. {\cite[Proposition 3.4.3]{jjs}}] Let $\Sl$ be a densely defined weighted shift on $\mathfrak{T}=(V,E)$ with weights $\boldsymbol{\lambda}$ and let  $\alpha>0$. Then:
 \begin{itemize}
  \item[(i)] $\mathcal{D}(|\Sl|^\alpha) = \{ f\in\ell^2(V)\,:\,\sum_{u\in V} \|\Sl e_u\|^{2\alpha}|f(u)|^2 <\infty \},$
  \item[(ii)] for every $u\in V$ we have $e_u\in\mathcal{D}(|\Sl|^\alpha)$ and
        $|\Sl|^\alpha e_u = \|\Sl e_u\|^\alpha e_u$,
  \item[(iii)] if $f\in\mathcal{D}(|\Sl|^\alpha)$, then
   \begin{equation}
    (|\Sl|^\alpha f)(u) = \|\Sl e_u\|^\alpha f(u),\qquad u\in V.
    \label{eq:mod}
   \end{equation}
 \end{itemize}
 \label{prop:mod}
\end{prop}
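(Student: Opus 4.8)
The plan is to reduce everything to the structure of $|\Sl|^2=\Sl^*\Sl$, which I will show is a \emph{diagonal} operator with respect to the orthonormal basis $\{e_u\}_{u\in V}$, and then to read off the three assertions from the spectral (functional) calculus. Recall from von Neumann's theorem that for the closed, densely defined operator $\Sl$ the operator $\Sl^*\Sl$ is nonnegative and self-adjoint, that $|\Sl|=(\Sl^*\Sl)^{1/2}$, and hence that $|\Sl|^\alpha=(\Sl^*\Sl)^{\alpha/2}$ for every $\alpha>0$ by applying the functional calculus to the function $t\mapsto t^{\alpha/2}$ on $[0,\infty)$. Thus it suffices to understand how $\Sl^*\Sl$ acts on the basis vectors and then to let the functional calculus produce the corresponding $\alpha$-th powers.

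First I would compute $\Sl^*$ on the basis. For $v\in V^\circ$ the estimate $|\langle\Sl f,e_v\rangle|=|\lambda_v|\,|f(\pa(v))|\le|\lambda_v|\,\|f\|$ shows that the functional $f\mapsto\langle\Sl f,e_v\rangle$ is bounded on $\mathcal{D}(\Sl)$, so $e_v\in\mathcal{D}(\Sl^*)$ and, comparing with $\langle f,\overline{\lambda_v}e_{\pa(v)}\rangle$, one gets $\Sl^*e_v=\overline{\lambda_v}e_{\pa(v)}$; likewise $\Sl^*e_{\rot}=0$ when a root is present. Next, using Proposition~\ref{prop:Sl} I write $\Sl e_u=\sum_{v\in\Chi(u)}\lambda_v e_v$ and apply $\Sl^*$ termwise: the partial sums $g_F:=\sum_{v\in F}\lambda_v e_v$ over finite $F\subset\Chi(u)$ converge to $\Sl e_u$, while, since $\pa(v)=u$ for every $v\in\Chi(u)$, their images $\Sl^*g_F=\big(\sum_{v\in F}|\lambda_v|^2\big)e_u$ converge to $\|\Sl e_u\|^2 e_u$. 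As $\Sl^*$ is closed, this yields $\Sl e_u\in\mathcal{D}(\Sl^*)$ and
\begin{equation*}
 \Sl^*\Sl e_u=\|\Sl e_u\|^2 e_u,\qquad u\in V,
\end{equation*}
so each $e_u$ is an eigenvector of $|\Sl|^2$ with eigenvalue $\|\Sl e_u\|^2\ge0$.

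With this in hand, assertion (ii) is immediate: applying the functional calculus for $t\mapsto t^{\alpha/2}$ to the eigenvector $e_u$ of $\Sl^*\Sl$ gives $|\Sl|^\alpha e_u=(\|\Sl e_u\|^2)^{\alpha/2}e_u=\|\Sl e_u\|^\alpha e_u$. For (i) and (iii) I would introduce the diagonal operator $D_\alpha$ defined by $\mathcal{D}(D_\alpha)=\{f\in\ell^2(V):\sum_{u\in V}\|\Sl e_u\|^{2\alpha}|f(u)|^2<\infty\}$ and $(D_\alpha f)(u)=\|\Sl e_u\|^\alpha f(u)$; this is a nonnegative self-adjoint operator for which $\mathcal{E}_V$ is a core. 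By (ii) the self-adjoint operators $|\Sl|^\alpha$ and $D_\alpha$ agree on $\mathcal{E}_V$, whence $|\Sl|^\alpha\supseteq\overline{D_\alpha\restriction_{\mathcal{E}_V}}=D_\alpha$, and since a self-adjoint operator admits no proper symmetric extension we conclude $|\Sl|^\alpha=D_\alpha$, which is precisely (i) together with (iii).

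The main obstacle is this last identification: establishing equality of the \emph{domains}, and not merely of the actions on $\mathcal{E}_V$. The point to be careful about is that agreement of two operators on a dense set does not force equality in general; here it works only because both operators are self-adjoint and $\mathcal{E}_V$ is a core for $D_\alpha$, so that maximality of self-adjointness can be invoked. Verifying that $\mathcal{E}_V$ is indeed a core for $D_\alpha$ (equivalently, that $D_\alpha\restriction_{\mathcal{E}_V}$ is essentially self-adjoint) is the one routine-but-essential lemma underlying the argument.
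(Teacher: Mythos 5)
This proposition is one the paper does not actually prove: it is recalled, with a ``cf.,'' from \cite[Proposition 3.4.3]{jjs}, so there is no internal proof to compare your argument against. Judged on its own terms, your proof is correct and self-contained, and it would serve as a complete proof of the cited result. The chain of reductions is sound: von Neumann's theorem gives $|\Sl|^\alpha=(\Sl^*\Sl)^{\alpha/2}$; your computation of $\Sl^*$ on basis vectors is right (it independently reproves part of what the paper later recalls as Proposition~\ref{prop:Sl*}, so no circularity arises); the closedness argument yielding $\Sl^*\Sl e_u=\|\Sl e_u\|^2e_u$ is valid, noting that since $\sum_{v\in\Chi(u)}|\lambda_v|^2<\infty$ only countably many $\lambda_v$ are nonzero, so the net of partial sums can be replaced by a sequence; and the eigenvector property does pass through the Borel functional calculus exactly as you claim, because $A x=\lambda x$ for self-adjoint $A$ forces $E(\{\lambda\})x=x$, whence $f(A)x=f(\lambda)x$. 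Your final identification $|\Sl|^\alpha=D_\alpha$ is the right mechanism and is where the real content lies: agreement on $\mathcal{E}_V$ plus the core property of $\mathcal{E}_V$ for $D_\alpha$ gives $D_\alpha\subseteq|\Sl|^\alpha$, and maximality of self-adjoint operators ($A\subseteq B$ with both self-adjoint implies $B=B^*\subseteq A^*=A$) finishes it. The one step you flag as routine-but-essential, namely that $D_\alpha\restriction_{\mathcal{E}_V}$ is essentially self-adjoint, is indeed routine: for $f\in\mathcal{D}(D_\alpha)$ the truncations $f_n$ of $f$ to finitely many coordinates of its (countable) support satisfy $f_n\to f$ and $D_\alpha f_n\to D_\alpha f$, both norms being tails of convergent series --- the same truncation device the paper itself uses in the proof of Lemma~\ref{lem:core}.
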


\begin{prop}[cf. {\cite[Proposition 3.5.1]{jjs}}]
 Let $\Sl$ be a densely defined weighted shift on $\mathfrak{T}=(V,E)$ with weights $\boldsymbol{\lambda}$ and let $\Sl = U|\Sl|$ be its polar decomposition. Then $U=S_{\boldsymbol{\pi}}$, where
 \begin{equation}
  \pi_v = \begin{cases}\displaystyle
                    \frac{ \lambda_v}{\|\Sl e_{\pa(v)}\|}, & \text{if }\pa(v)\in V_{\boldsymbol{\lambda}}^+\\[2ex]
                    0, & \text{otherwise} \end{cases} ,\quad v\in V^\circ.
  \label{eq:pi}
 \end{equation}
 \label{prop:Spi}
\end{prop}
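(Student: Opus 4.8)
The plan is to verify directly that the bounded operator $S_{\boldsymbol{\pi}}$ determined by the weights $\boldsymbol{\pi}$ in (\ref{eq:pi}) satisfies the three defining properties of the partial isometry in the polar decomposition of $\Sl$, and then invoke uniqueness. Recall that $\Sl=U|\Sl|$ is characterized by the requirements that $U$ be a partial isometry, that $\Sl=U|\Sl|$ hold on $\mathcal{D}(\Sl)=\mathcal{D}(|\Sl|)$, and that $\mathcal{N}(U)=\mathcal{N}(|\Sl|)$; such $U$ is unique. So it suffices to establish these for $S_{\boldsymbol{\pi}}$.

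First I would show that $S_{\boldsymbol{\pi}}$ is an everywhere-defined partial isometry. Applying (\ref{eq:Sleu}) to the weights $\boldsymbol{\pi}$ gives $\|S_{\boldsymbol{\pi}}e_u\|^2=\sum_{v\in\Chi(u)}|\pi_v|^2$. If $u\in V_{\boldsymbol{\lambda}}^+$, then every $v\in\Chi(u)$ has $\pa(v)=u\in V_{\boldsymbol{\lambda}}^+$, so $\pi_v=\lambda_v/\|\Sl e_u\|$ and the sum reduces to $\|\Sl e_u\|^{-2}\sum_{v\in\Chi(u)}|\lambda_v|^2=1$; if $u\notin V_{\boldsymbol{\lambda}}^+$, then $\pi_v=0$ for all $v\in\Chi(u)$, so $S_{\boldsymbol{\pi}}e_u=0$. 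Hence $\|S_{\boldsymbol{\pi}}e_u\|\leq 1$ for every $u$, which yields $\mathcal{D}(S_{\boldsymbol{\pi}})=\ell^2(V)$ and $\|S_{\boldsymbol{\pi}}\|\leq 1$. Since distinct vertices have disjoint sets of children in a directed tree, the family $\{S_{\boldsymbol{\pi}}e_u:u\in V_{\boldsymbol{\lambda}}^+\}$ is orthonormal; thus $S_{\boldsymbol{\pi}}$ is isometric on $\mathcal{M}:=\overline{\operatorname{lin}}\{e_u:u\in V_{\boldsymbol{\lambda}}^+\}$ and vanishes on $\mathcal{M}^\perp$, i.e.\ $S_{\boldsymbol{\pi}}$ is a partial isometry with initial space $\mathcal{M}$.

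Next I would check the two remaining conditions. By Proposition \ref{prop:mod} we have $|\Sl|e_u=\|\Sl e_u\|e_u$, so $S_{\boldsymbol{\pi}}|\Sl|e_u=\|\Sl e_u\|\,S_{\boldsymbol{\pi}}e_u$; for $u\in V_{\boldsymbol{\lambda}}^+$ the factors $\|\Sl e_u\|$ cancel and this equals $\sum_{v\in\Chi(u)}\lambda_v e_v=\Sl e_u$, while for $u\notin V_{\boldsymbol{\lambda}}^+$ both sides are $0$. Hence $S_{\boldsymbol{\pi}}|\Sl|$ and $\Sl$ agree on every $e_u$, and therefore on $\mathcal{E}=\mathcal{E}_V\cap\mathcal{D}(\Sl)$. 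Since $S_{\boldsymbol{\pi}}$ is bounded and $|\Sl|$ is closed, $S_{\boldsymbol{\pi}}|\Sl|$ is a closed operator with domain $\mathcal{D}(|\Sl|)=\mathcal{D}(\Sl)$; as $\mathcal{E}$ is a core for $\Sl$ by Lemma \ref{lem:core}, the closed extension $S_{\boldsymbol{\pi}}|\Sl|$ of $\Sl\restriction_{\mathcal{E}}$ must contain $\Sl$, and equality of domains forces $S_{\boldsymbol{\pi}}|\Sl|=\Sl$. For the kernel condition, I would observe that $\mathcal{N}(S_{\boldsymbol{\pi}})=\mathcal{M}^\perp=\overline{\operatorname{lin}}\{e_u:u\notin V_{\boldsymbol{\lambda}}^+\}$ by the first step, and that (\ref{eq:mod}) with $\alpha=1$ gives $\mathcal{N}(|\Sl|)=\{f:\|\Sl e_u\|f(u)=0\text{ for all }u\}=\overline{\operatorname{lin}}\{e_u:u\notin V_{\boldsymbol{\lambda}}^+\}$ as well, so the two kernels coincide.

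Having produced a partial isometry $S_{\boldsymbol{\pi}}$ with $\Sl=S_{\boldsymbol{\pi}}|\Sl|$ and $\mathcal{N}(S_{\boldsymbol{\pi}})=\mathcal{N}(|\Sl|)$, uniqueness of the polar decomposition gives $U=S_{\boldsymbol{\pi}}$. I expect the only delicate point to be the passage from agreement on the core $\mathcal{E}$ to the genuine operator identity $S_{\boldsymbol{\pi}}|\Sl|=\Sl$; here the core property from Lemma \ref{lem:core} together with the equality $\mathcal{D}(\Sl)=\mathcal{D}(|\Sl|)$ carries the argument, while everything else is a direct computation in the orthonormal basis $\{e_u\}_{u\in V}$.
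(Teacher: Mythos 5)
The paper never proves this proposition itself --- it is recalled verbatim from \cite[Proposition~3.5.1]{jjs} --- so your argument has to be judged on its own merits rather than against an in-paper proof. Your overall plan is the right one: exhibit $S_{\boldsymbol{\pi}}$ as a partial isometry satisfying $\Sl=S_{\boldsymbol{\pi}}|\Sl|$ and $\mathcal{N}(S_{\boldsymbol{\pi}})=\mathcal{N}(|\Sl|)$, then invoke uniqueness of the polar decomposition. The computation $\|S_{\boldsymbol{\pi}}e_u\|=1$ for $u\in V_{\boldsymbol{\lambda}}^+$ and $S_{\boldsymbol{\pi}}e_u=0$ otherwise, the resulting boundedness and domain claim (which indeed follow from \eqref{eq:DSl}), the identification of the initial space $\mathcal{M}$, the equality $\mathcal{N}(S_{\boldsymbol{\pi}})=\mathcal{N}(|\Sl|)$, and the appeal to uniqueness are all correct.

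There is, however, a genuine flaw, and it sits exactly at the step you yourself flagged as the delicate one. You assert that $S_{\boldsymbol{\pi}}|\Sl|$ is closed ``since $S_{\boldsymbol{\pi}}$ is bounded and $|\Sl|$ is closed.'' That principle is false: a product (bounded)$\circ$(closed), with the bounded factor on the \emph{left}, need not be closed. For example, on $\ell^2(\mathbb{Z}_+)$ let $B$ be the closed diagonal operator $(Bf)(n)=(n+1)f(n)$ and let $A$ be the orthogonal projection onto $\mathbb{C}e_0$ (even a partial isometry); then $ABf=f(0)e_0$ on $\mathcal{D}(B)$, a bounded operator whose domain is a proper dense subspace, hence not closed. (The true statement has the opposite order: closed$\circ$bounded is closed, which is perhaps what you had in mind.) The conclusion you need is nevertheless true here, for a reason specific to this situation, and the repair is short. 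Either (a) use that $\mathcal{R}(|\Sl|)\subseteq\mathcal{M}$, since $(|\Sl|f)(u)=\|\Sl e_u\|f(u)$ vanishes for $u\notin V_{\boldsymbol{\lambda}}^+$; as $S_{\boldsymbol{\pi}}$ is isometric on $\mathcal{M}$, convergence of $S_{\boldsymbol{\pi}}|\Sl|f_n$ forces $(|\Sl|f_n)$ to be Cauchy, and then closedness of $|\Sl|$ together with continuity of $S_{\boldsymbol{\pi}}$ yields closedness of $S_{\boldsymbol{\pi}}|\Sl|$, after which your core argument works. Or (b), simpler, bypass the core argument entirely: for every $f\in\mathcal{D}(|\Sl|)=\mathcal{D}(\Sl)$ and $v\in V^\circ$,
\begin{equation*}
 (S_{\boldsymbol{\pi}}|\Sl|f)(v)=\pi_v\,\|\Sl e_{\pa(v)}\|\,f(\pa(v))=\lambda_v f(\pa(v))=(\Sl f)(v),
\end{equation*}
where the middle equality is the definition of $\pi_v$ when $\pa(v)\in V_{\boldsymbol{\lambda}}^+$, and holds trivially when $\pa(v)\notin V_{\boldsymbol{\lambda}}^+$ because then $\lambda_v=0$; this is the same computation as \eqref{eq:SpSl} and proves $\Sl=S_{\boldsymbol{\pi}}|\Sl|$ on the whole domain in one stroke. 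With either repair the rest of your proof goes through unchanged.
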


The following proposition contains a formula for $\Sl^*$.

\begin{prop}[cf. {\cite[Proposition 3.4.1]{jjs}}]
 Let $\Sl$ be a densely defined weighted shift on a directed tree $\mathfrak{T}=(V,E)$. Then
 \begin{itemize}
  \item[(i)] $\mathcal{E}_V\subseteq\mathcal{D}(\Sl^*)$ and
   \begin{equation*}
    \Sl^* e_u=\begin{cases} \overline{\lambda_u}e_{\pa(u)}, &u\in V^\circ\\
    									0, &u=\rot \end{cases} ,
   \end{equation*}
  \item[(ii)] $\mathcal{D}(\Sl^*) = \left\{f\in\ell^2(V)\,:\,\sum_{u\in V}\left|\sum_{v\in\Chi(u)} \overline{\lambda_v}f(v)\right|^2<\infty\right\}$,
  \item[(iii)] $(\Sl^* f)(u) = \sum_{v\in\Chi(u)} \overline{\lambda_v}f(v)$ for every $u\in V$ and $f\in\mathcal{D}(\Sl^*)$.
 \end{itemize}
 \label{prop:Sl*}
\end{prop}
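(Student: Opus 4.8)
The plan is to read off (i) directly from the defining relation \eqref{eq:Sl} together with the definition of the adjoint, and then to obtain (ii) and (iii) by testing the defining identity of $\Sl^*$ against the orthonormal basis $\{e_u\}_{u\in V}$, invoking Lemma~\ref{lem:core} to transfer the resulting identity from the spanning vectors to all of $\mathcal{D}(\Sl)$.

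For (i) I would fix $u\in V$ and compute, for an arbitrary $f\in\mathcal{D}(\Sl)$, the number $\langle \Sl f,e_u\rangle=(\Sl f)(u)$. By \eqref{eq:Sl} this equals $\lambda_u f(\pa(u))=\langle f,\overline{\lambda_u}e_{\pa(u)}\rangle$ if $u\in V^\circ$, and $0=\langle f,0\rangle$ if $u=\rot$. As this holds for every $f\in\mathcal{D}(\Sl)$, the functional $f\mapsto\langle \Sl f,e_u\rangle$ is bounded, so $e_u\in\mathcal{D}(\Sl^*)$ with the stated value of $\Sl^* e_u$; in particular $\mathcal{E}_V\subseteq\mathcal{D}(\Sl^*)$.

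For (ii) and (iii) I would first note that, since $\Sl$ is densely defined, Proposition~\ref{prop:Sl}(iii) gives $\mathcal{E}_V\subseteq\mathcal{D}(\Sl)$, so the core $\mathcal{E}$ of Lemma~\ref{lem:core} coincides with $\mathcal{E}_V$. A preliminary point is that for any $g\in\ell^2(V)$ and any $w\in V$ the inner series $\sum_{v\in\Chi(w)}\overline{\lambda_v}g(v)$ converges absolutely: by Cauchy--Schwarz it is bounded by $\big(\sum_{v\in\Chi(w)}|\lambda_v|^2\big)^{1/2}\|g\|$, and $\sum_{v\in\Chi(w)}|\lambda_v|^2=\|\Sl e_w\|^2<\infty$ by \eqref{eq:Sleu}. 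Thus $h(w):=\sum_{v\in\Chi(w)}\overline{\lambda_v}g(v)$ is well defined for every $w$, and using \eqref{eq:Sleu} I get $\langle \Sl e_w,g\rangle=\sum_{v\in\Chi(w)}\lambda_v\overline{g(v)}=\overline{h(w)}=\langle e_w,h\rangle$, hence $\langle \Sl f,g\rangle=\langle f,h\rangle$ for every $f\in\mathcal{E}_V$ by linearity.

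From this common computation the two inclusions split. If $g\in\mathcal{D}(\Sl^*)$, then $(\Sl^* g)(w)=\langle \Sl^* g,e_w\rangle=\overline{\langle \Sl e_w,g\rangle}=h(w)$, which is (iii), and the finiteness of $\|\Sl^* g\|^2=\sum_w|h(w)|^2$ is exactly the condition in (ii). Conversely, if $g$ satisfies (ii) then $h\in\ell^2(V)$, so $f\mapsto\langle \Sl f,g\rangle=\langle f,h\rangle$ is bounded on $\mathcal{E}_V$; because $\mathcal{E}_V$ is a core, I would extend this identity to all $f\in\mathcal{D}(\Sl)$ by choosing $f_n\in\mathcal{E}_V$ with $f_n\to f$ and $\Sl f_n\to \Sl f$ and passing to the limit, concluding $g\in\mathcal{D}(\Sl^*)$ with $\Sl^* g=h$. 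The two inner-product computations are routine; the only real care is needed in this last step, where an identity verified only on the basis vectors $e_w$ has to be lifted to the entire domain, and it is precisely the core property of Lemma~\ref{lem:core} that licenses that passage.
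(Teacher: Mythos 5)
Your proof is correct, but note that the paper itself does not prove this proposition: it is recalled from the literature (cf.\ \cite[Proposition 3.4.1]{jjs}) and used as a known tool, so there is no in-paper argument to compare against. Judged on its own, your argument is a complete and self-contained derivation. Part (i) is handled correctly by exhibiting $f\mapsto\langle \Sl f,e_u\rangle$ as a bounded functional via \eqref{eq:Sl}. For (ii) and (iii), the two points that actually require care are both addressed: first, the absolute convergence of $h(w)=\sum_{v\in\Chi(w)}\overline{\lambda_v}g(v)$ via Cauchy--Schwarz and $\sum_{v\in\Chi(w)}|\lambda_v|^2=\|\Sl e_w\|^2<\infty$ (which uses density of the domain through Proposition~\ref{prop:Sl}), so that $h$ is well defined pointwise even before one knows $h\in\ell^2(V)$; and second, the lifting of the identity $\langle \Sl f,g\rangle=\langle f,h\rangle$ from $f\in\mathcal{E}_V$ to all $f\in\mathcal{D}(\Sl)$, which is exactly what the core property of Lemma~\ref{lem:core} licenses (equivalently, one could say that $\Sl^*=(\Sl\restriction_{\mathcal{E}_V})^*$ because the adjoint of an operator coincides with the adjoint of its closure). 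Without that last step the reverse inclusion in (ii) would be unjustified, since boundedness of $f\mapsto\langle \Sl f,g\rangle$ on the span of the basis vectors alone does not formally place $g$ in $\mathcal{D}(\Sl^*)$; you identified and closed this gap explicitly.
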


Let $\Sl^*=V|\Sl^*|$ be the polar decomposition of $\Sl^*$. From Proposition \ref{prop:Spi} it follows that $V=S_{\boldsymbol{\pi}}^*$ with $\boldsymbol{\pi}$ given by \eqref{eq:pi}. The exact formula for $S_{\boldsymbol{\pi}}^*$ can be easily derived from Proposition \ref{prop:Sl*}.

The following theorem gives a formula for powers of modulus of $\Sl^*$.

\begin{thm}
 Let $\Sl$ be a densely defined weighted shift on directed tree $\mathfrak{T}=(V,E)$ and let $\alpha>0$. Then the following assertions hold:
 \begin{itemize}
  \item[(i)] $\mathcal{D}(|\Sl^*|^\alpha) = \left\{f\in\ell^2(V)\,:\, \sum_{u\in V^\circ} \|\Sl e_u\|^{2\alpha-2} \left|\sum_{v\in\Chi(u)} \overline{\lambda_v}f(v) \right|^2 <\infty\right\}$.
  \item[(ii)] for every $u\in V$ the following formula holds:
  $$(|\Sl^*|^\alpha f)(u) = \begin{cases} \|\Sl e_{\pa(u)}\|^{\alpha-2}\lambda_u\sum_{v\in\Chi(\pa(u))}\overline{\lambda_v}f(v), &\text{if }u\in \Chi(V_{\boldsymbol{\lambda}}^+)\\
  0,& \text{if }u\in V\setminus \Chi(V_{\boldsymbol{\lambda}}^+) \end{cases},$$
  \item [(iii)] the formula
  \begin{equation}
   |\Sl^*|^\alpha = \bigoplus_{u\in V} \|\Sl e_u\|^\alpha P_u
   \label{eq:sum}
  \end{equation} holds, where $P_u$ is the orthogonal projection from $\ell^2(V)$ onto the linear span of $\Sl e_u$ for all $u\in V$ (if $\Sl e_u=0$, then $P_u=0$).
 \end{itemize}
 \label{thm:S*a}
\end{thm}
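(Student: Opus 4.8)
The plan is to diagonalize the positive self-adjoint operator $|\Sl^*|^2=\Sl\Sl^*$ and then apply the functional calculus, obtaining (iii) first and reading off (i) and (ii) from it. The whole argument rests on the tree structure: since every vertex of $V^\circ$ has exactly one parent, the children sets $\{\Chi(u)\}_{u\in V}$ are pairwise disjoint, so the subspaces $\mathcal H_u:=\overline{\mathrm{lin}}\{e_v:v\in\Chi(u)\}$ are pairwise orthogonal and, together with $\mathbb C e_{\rot}$ in the rooted case, decompose $\ell^2(V)$. For $u\in V_{\boldsymbol\lambda}^+$ I would set $\hat e_u:=\Sl e_u/\|\Sl e_u\|\in\mathcal H_u$, so that $P_u=\langle\,\cdot\,,\hat e_u\rangle\hat e_u$ and $\{\hat e_u\}_{u\in V_{\boldsymbol\lambda}^+}$ is an orthonormal system.

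The key computation is that each $\hat e_u$ is an eigenvector of $\Sl\Sl^*$. Using Proposition \ref{prop:Sl*}, a direct calculation shows $\hat e_u\in\mathcal D(\Sl^*)$ with $\Sl^*\hat e_u=\|\Sl e_u\|\,e_u$, because $\Sl^* e_v=\overline{\lambda_v}e_u$ for $v\in\Chi(u)$ and $\sum_{v\in\Chi(u)}|\lambda_v|^2=\|\Sl e_u\|^2$; applying $\Sl$ then yields
\[
\Sl\Sl^*\hat e_u=\|\Sl e_u\|\,\Sl e_u=\|\Sl e_u\|^2\,\hat e_u .
\]
Thus $\hat e_u$ is an eigenvector of $\Sl\Sl^*=|\Sl^*|^2$ with eigenvalue $\|\Sl e_u\|^2$.

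Next I would establish completeness of this eigensystem. By Lemma \ref{lem:core} and Proposition \ref{prop:Sl}(iii) the set $\mathcal E$ is a core and each $e_u\in\mathcal D(\Sl)$, so $\overline{\mathcal R(\Sl)}=\overline{\mathrm{lin}}\{\Sl e_u:u\in V\}=\overline{\mathrm{lin}}\{\hat e_u:u\in V_{\boldsymbol\lambda}^+\}$, whose orthogonal complement is $\mathcal N(\Sl^*)=\mathcal N(|\Sl^*|)$. Hence $\{\hat e_u\}_{u\in V_{\boldsymbol\lambda}^+}$, completed by any orthonormal basis of $\mathcal N(\Sl^*)$ (on which $|\Sl^*|$ vanishes), is a complete orthonormal system of eigenvectors of the self-adjoint operator $|\Sl^*|^2$. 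Since a self-adjoint operator possessing a complete orthonormal eigenbasis coincides with the associated diagonal operator, the functional calculus gives $|\Sl^*|=\bigoplus_u\|\Sl e_u\|P_u$ and, raising each eigenvalue to the power $\alpha$,
\[
|\Sl^*|^\alpha=\bigoplus_{u\in V}\|\Sl e_u\|^\alpha P_u,
\]
which is (iii). This diagonalization, and in particular the verification that the eigenvectors exhaust $\ell^2(V)$ so that the spectral calculus reduces to raising eigenvalues to the $\alpha$-th power, is the step demanding the most care in the unbounded setting; the remainder is routine.

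Finally I would extract (i) and (ii) from (iii). By orthogonality of the ranges of the $P_u$, Parseval's identity shows that $f\in\mathcal D(|\Sl^*|^\alpha)$ iff $\sum_u\|\Sl e_u\|^{2\alpha}\|P_uf\|^2<\infty$; since $\|P_uf\|^2=|\langle f,\Sl e_u\rangle|^2/\|\Sl e_u\|^2$ and $\langle f,\Sl e_u\rangle=\sum_{v\in\Chi(u)}\overline{\lambda_v}f(v)$, this reads $\sum_u\|\Sl e_u\|^{2\alpha-2}|\sum_{v\in\Chi(u)}\overline{\lambda_v}f(v)|^2<\infty$. The terms with $u\notin V_{\boldsymbol\lambda}^+$ vanish, and the single term $u=\rot$ is always finite for $f\in\ell^2(V)$ by Cauchy--Schwarz, so restricting the sum to $u\in V^\circ$ leaves the condition unchanged, giving (i). For (ii) I would evaluate $(|\Sl^*|^\alpha f)(w)=\sum_u\|\Sl e_u\|^\alpha(P_uf)(w)$ at a fixed vertex $w$; as $(\Sl e_u)(w)=\lambda_w$ when $w\in\Chi(u)$ and $0$ otherwise, only the unique term $u=\pa(w)$ survives when $w\in\Chi(V_{\boldsymbol\lambda}^+)$, producing the stated formula, while $(|\Sl^*|^\alpha f)(w)=0$ for $w\in V\setminus\Chi(V_{\boldsymbol\lambda}^+)$.
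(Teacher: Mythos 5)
Your proposal is correct, but it reaches the crucial assertion (iii) by a genuinely different route from the paper. The paper never diagonalizes $\Sl\Sl^*$: it starts from the polar decomposition $\Sl^*=S_{\boldsymbol{\pi}}^*|\Sl^*|$ (Proposition \ref{prop:Spi}), deduces $|\Sl^*|=S_{\boldsymbol{\pi}}\Sl^*$, and computes $(|\Sl^*|f)(u)$ pointwise, which is assertion (ii) for $\alpha=1$; it then matches this formula against $\sum_{u\in V}\|\Sl e_u\|P_uf$ on $\mathcal{D}(\Sl^*)$ and verifies the reverse domain inclusion $\mathcal{D}\bigl(\bigoplus_{u\in V}\|\Sl e_u\|P_u\bigr)\subseteq\mathcal{D}(\Sl^*)$ by a direct estimate, obtaining $|\Sl^*|=\bigoplus_{u\in V}\|\Sl e_u\|P_u$; general $\alpha$ then comes from the functional calculus, exactly as in your last step. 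You instead exhibit the vectors $\Sl e_u/\|\Sl e_u\|$ as eigenvectors of $\Sl\Sl^*$, prove completeness of the eigensystem via $\ell^2(V)=\overline{\mathcal{R}(\Sl)}\oplus\mathcal{N}(\Sl^*)$ (Lemma \ref{lem:core} is indeed what makes $\overline{\mathcal{R}(\Sl)}$ equal the closed span of the $\Sl e_u$), and identify $\Sl\Sl^*$ with the associated diagonal operator. This buys independence from Proposition \ref{prop:Spi} and makes the spectral picture transparent; the price is the operator-theoretic facts your sketch leans on, which a full write-up should make explicit: first, $|\Sl^*|^2=\Sl\Sl^*$ uses $\Sl^{**}=\Sl$, i.e.\ closedness of $\Sl$; second, the maximality step you rightly flag is settled by noting that the diagonal operator $D$ satisfies $D\subseteq\Sl\Sl^*$ (closedness of the self-adjoint operator $\Sl\Sl^*$ plus agreement on finite linear combinations of eigenvectors), whence $\Sl\Sl^*=(\Sl\Sl^*)^*\subseteq D^*=D$; third, the identity $\Sl^*(\Sl e_u)=\|\Sl e_u\|^2e_u$ should be justified via Proposition \ref{prop:Sl*}\,(ii)--(iii) or via closedness of $\Sl^*$ applied to partial sums, not by termwise application of the unbounded operator $\Sl^*$ to an infinite series. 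Your extraction of (i) and (ii) from (iii), including discarding the root term by the Cauchy--Schwarz inequality, coincides with the computations the paper itself performs at the corresponding stage.
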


\begin{proof}
 We start by proving all assertions for $\alpha=1$.

 It is known that $\mathcal{D}(|\Sl^*|)=\mathcal{D}(\Sl^*)$, which together with (\ref{eq:DSl}), implies the part (i). Moreover, since $\Sl^*=S_{\boldsymbol{\pi}}^*|\Sl^*|$ is the polar decomposition of $\Sl^*$, where $\pi$ is given by (\ref{eq:pi}), we conclude that $|\Sl^*|=S_{\boldsymbol{\pi}} \Sl^*$. Hence for all $f\in\mathcal{D}(\Sl^*)$ and $u\in V$ we obtain
 \begin{align*}
  (|\Sl^*|f)(u) &= \begin{cases} \pi_u (\Sl^*f)(\pa(u)), &\text{if }u\in V^\circ\\[1ex]
                                 0,&\text{if }u=\rot
                                \end{cases} =\notag\\[1ex]
	&\stackrel{(\ref{eq:sum})}{=} \begin{cases} \displaystyle\frac{\lambda_u}{\|\Sl e_{\pa(u)}\|} \sum_{v\in\Chi(\pa(u))} \overline{\lambda_v} f(v),&\text{if }u\in \Chi(V_{\boldsymbol{\lambda}}^+)\\[1ex]
	         0,&\text{if }u\in V\setminus \Chi(V_{\boldsymbol{\lambda}}^+)
	        \end{cases},
 \end{align*}
 so assertion (ii) holds for $\alpha=1$.

 Let now $P_u$ be as in (iii). Then for every $f\in\ell^2(V)$, $u\in V_{\boldsymbol{\lambda}}^+$ and $w\in V$ we have
 \begin{align}
  P_u f &= \frac{ \left<f,\Sl e_u\right> \Sl e_u }{\|\Sl e_u\|^2} = \frac 1{\|\Sl e_u\|^2} \left< f,\sum_{v\in\Chi(u)}\lambda_v e_v \right> \sum_{w\in\Chi(u)} \lambda_w e_w = \nonumber\\
  &= \frac 1{\|\Sl e_u\|^2} \left(\sum_{v\in\Chi(u)} \overline{\lambda_v} f(v) \right) \sum_{w\in\Chi(u)} \lambda_w e_w.
  \label{eq:Puf}
 \end{align}
 Observe that if $\left<P_u f, P_v g\right> \neq 0$ for some $u,v\in V$ and $f,g\in\ell^2(V)$, then from (\ref{eq:Puf}) it follows that there exists $w\in\Chi(u)\cap\Chi(v)$. This implies that $u=\pa(w)=v$. Hence the orthogonality of the sum in (iii) follows.

 For any $f\in\mathcal{D}(\Sl^*)$, $u\in V_{\boldsymbol{\lambda}}^+$ and $w\in V$ we infer from (\ref{eq:Puf}) that
 \begin{align}
  \allowdisplaybreaks
  (P_u f)(w) &= \begin{cases}\displaystyle \frac{\lambda_w}{\|\Sl e_u\|^2} \sum_{v\in\Chi(u)} \overline{\lambda_v}f(v),&\text{if }w\in\Chi(u)\\
						0,&\text{if }w\in V\setminus\Chi(u)
                       \end{cases} = \nonumber\\
 &= \begin{cases}\displaystyle \frac{\lambda_w}{\|\Sl e_{\pa(w)}\|^2} \sum_{v\in\Chi(\pa(w))} \overline{\lambda_v}f(v),&\text{if }w\in\Chi(u)\\
						0,&\text{if }w\in V\setminus\Chi(u)
                       \end{cases} = \nonumber\\
 &= \begin{cases}\displaystyle \frac { 1}{\|\Sl e_{\pa(w)}\|} (|\Sl^*|f)(w),&\text{if }w\in\Chi(u)\\
						0,&\text{if }w\in V\setminus\Chi(u)
                       \end{cases} .
  \label{eq:Pufw}
 \end{align}
 Hence for all $f\in\mathcal{D}(\Sl^*)$ and $w\in V^\circ$ we obtain
 \begin{equation*}
  (|\Sl^*|f)(w) = \|\Sl e_{\pa(w)}\| (P_{\pa(w)}f)(w) = \sum_{u\in V} \|\Sl e_u\| (P_u f)(w).
 \end{equation*}
 Therefore $|\Sl^*|f = \sum_{u\in V} \|\Sl e_u\| P_u f$
 for every $f\in\mathcal{D}(\Sl^*)$. To show that
 \begin{equation}
  |\Sl^*| = \bigoplus_{u\in V} \|\Sl e_u\| P_u,
  \label{eq:|Sl*|pu}
 \end{equation}
 it now suffices to check the inclusion
 \begin{equation*}
  \mathcal{D}\left(\bigoplus_{u\in V} \|\Sl e_u\| P_u\right) \subseteq \mathcal{D}(\Sl^*).
 \end{equation*}
 Let $f$ belong to the left-hand side. This means that
 \begin{eqnarray}
  \infty &>& \sum_{u\in V} \|\Sl e_u\|^2 \|P_u f\|^2 \stackrel{(\ref{eq:Puf})}{=} \nonumber\\
  &=& \sum_{u\in V} \|\Sl e_u\|^2 \left(\frac1{\|\Sl e_u\|^4} \left| \sum_{v\in\Chi(u)} \overline{\lambda_v}f(v) \right|^2 \left\| \Sl e_u\right\|^2 \right) =\nonumber\\
  &=& \sum_{u\in V} \left| \sum_{v\in\Chi(u)} \overline{\lambda_v}f(v) \right|^2,
  \label{eq:DOpl}
 \end{eqnarray}
 which implies that $f\in\mathcal{D}(\Sl^*)= \mathcal{D}(|\Sl^*|)$. This completes the prove of (\ref{eq:|Sl*|pu}).

 Let now $\alpha>0$ be arbitrary. The assertion (iii) of the theorem follows immediately from (\ref{eq:|Sl*|pu}). To prove (i) it now suffices to observe that, by calculations similar to (\ref{eq:DOpl}), $f\in \mathcal{D}(|\Sl^*|^\alpha) = \mathcal{D}(\bigoplus_{u\in V} \|\Sl e_u\|^\alpha P_u)$ if and only if
 \begin{equation*}
   \infty > \sum_{u\in V} \|\Sl e_u \|^{2\alpha} \|P_u f\|^2 = \sum_{u\in V} \|\Sl e_u\|^{2\alpha-2} \left|\sum_{v\in\Chi(u)} \overline{\lambda_v} f(v) \right|^2.
 \end{equation*}
 Finally, let $f\in\mathcal{D}(|\Sl^*|^\alpha)$ and $w\in V$. Then
 \begin{align*}
  \allowdisplaybreaks
  (|\Sl^*|^\alpha f)(w) &\stackrel{(iii)}{=} \sum_{u\in V} \|\Sl e_u\|^{\alpha} (P_u f)(w) =\\
  &=\sum_{u\in V_{\boldsymbol{\lambda}}^+} \|\Sl e_u\|^{\alpha} (P_u f)(w) =\\[1ex]
  &=\begin{cases} \|\Sl e_{\pa(w)}\|^\alpha (P_{\pa(w)} f)(w), &\text{if }w\in \Chi(V_{\boldsymbol{\lambda}}^+)\\
  								0, &\text{if } w\in V\setminus \Chi(V_{\boldsymbol{\lambda}}^+)
  								\end{cases} =\\[2ex]
  &\stackrel{(\ref{eq:Pufw})}{=} \begin{cases} \displaystyle\|\Sl e_{\pa(w)}\|^{2\alpha-2}\lambda_w \sum_{v\in\Chi(\pa(w))} \overline{\lambda_v}f(v),&\text{if }w\in\Chi(u)\\
						0,&\text{if }w\in V\setminus\Chi(u)
                       \end{cases},
 \end{align*}
 which is exactly the claim of (ii). Thus the prove is complete.
 \end{proof}

\section{Aluthge transform of a weighted shift}

In this section we give a description of the $t$-Aluthge transform of a weighted shift on a directed tree. It turns out that its closure is again a weighed shift on the same tree.

\begin{thm}
 Let $\Sl$ be a densely defined weighted shift on $\mathfrak{T}=(V,E)$ with $\boldsymbol{\lambda}:V^\circ\to\mathbb{C}$ and let $t\in(0,1]$. Then
 \begin{itemize}
  \item[(i)] $\mathcal{D}(\Delta_t(\Sl)) = \mathcal{D}(S_{\boldsymbol{\mu}})\cap\mathcal{D}(|\Sl|^{1-t})$, where
 \begin{equation}
  \mu_v = \begin{cases}\displaystyle
  					\frac{\|\Sl e_v\|^t}{\|\Sl e_{\pa(v)}\|^t} \lambda_v, & \text{if }\pa(v)\in V_{\boldsymbol{\lambda}}^+\\[2ex]
  					0, & \text{otherwise} \end{cases} ,\quad v\in V^\circ,
  \label{eq:mu}
 \end{equation}
  \item[(ii)] $\Delta_t(\Sl)$ is closable and $\overline{\Delta_t(\Sl)} = S_{\boldsymbol{\mu}}$.
 \end{itemize}

 \label{thm:DtSl}
\end{thm}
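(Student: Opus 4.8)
The plan is to evaluate the threefold composition $\Delta_t(\Sl)=|\Sl|^tU|\Sl|^{1-t}$ pointwise, recognize it as a restriction of the weighted shift $S_{\boldsymbol\mu}$, and then promote the inclusion $\Delta_t(\Sl)\subseteq S_{\boldsymbol\mu}$ to the equality of closures asserted in (ii) by invoking the core Lemma \ref{lem:core}. Throughout I would use that $U=S_{\boldsymbol\pi}$ with $\boldsymbol\pi$ as in \eqref{eq:pi} (Proposition \ref{prop:Spi}) together with the diagonal action of the moduli from Proposition \ref{prop:mod}.

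The first step is to simplify the domain of the composition. Since $U$ is a partial isometry it is bounded and everywhere defined, so $\mathcal{D}(U)=\ell^2(V)$ and hence $\mathcal{D}(\Delta_t(\Sl))=\{f\in\mathcal{D}(|\Sl|^{1-t}):U|\Sl|^{1-t}f\in\mathcal{D}(|\Sl|^t)\}$. For $f\in\mathcal{D}(|\Sl|^{1-t})$ put $g=|\Sl|^{1-t}f$, so that $g(u)=\|\Sl e_u\|^{1-t}f(u)$ by \eqref{eq:mod}; then $(Ug)(v)=\pi_v\|\Sl e_{\pa(v)}\|^{1-t}f(\pa(v))$ for $v\in V^\circ$, and substituting \eqref{eq:pi} collapses the factor $\|\Sl e_{\pa(v)}\|$ to give $(Ug)(v)=\lambda_v\|\Sl e_{\pa(v)}\|^{-t}f(\pa(v))$ whenever $\pa(v)\in V_{\boldsymbol\lambda}^+$ (and $0$ otherwise). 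Applying $|\Sl|^t$ multiplies by $\|\Sl e_v\|^t$ and produces exactly $\mu_vf(\pa(v))$, so on its domain $\Delta_t(\Sl)$ acts as $S_{\boldsymbol\mu}$. The remaining membership condition $U|\Sl|^{1-t}f\in\mathcal{D}(|\Sl|^t)$, i.e. $\sum_{v\in V}\|\Sl e_v\|^{2t}|(Ug)(v)|^2<\infty$, rewrites after the same substitution as $\sum_{v\in V}|\mu_v|^2|f(\pa(v))|^2<\infty$; regrouping the sum over $u=\pa(v)$ with $v\in\Chi(u)$ identifies this, via \eqref{eq:DSl}, with $f\in\mathcal{D}(S_{\boldsymbol\mu})$. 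This settles (i) and shows at once that $\Delta_t(\Sl)\subseteq S_{\boldsymbol\mu}$.

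For (ii) I would argue as follows. The operator $S_{\boldsymbol\mu}$ is closed by Proposition \ref{prop:Sl}(i); since $\Gamma(\Delta_t(\Sl))\subseteq\Gamma(S_{\boldsymbol\mu})$ and the latter is closed, the closure of $\Gamma(\Delta_t(\Sl))$ is again contained in a graph, so $\Delta_t(\Sl)$ is closable with $\overline{\Delta_t(\Sl)}\subseteq S_{\boldsymbol\mu}$. For the reverse inclusion it suffices to exhibit a core of $S_{\boldsymbol\mu}$ inside $\mathcal{D}(\Delta_t(\Sl))$. Every $e_u$ lies in $\mathcal{D}(|\Sl|^{1-t})$ (by Proposition \ref{prop:mod}(ii) when $t<1$, and trivially when $t=1$, as then that domain is all of $\ell^2(V)$), and this domain is a subspace, so $\mathcal{E}_V\subseteq\mathcal{D}(|\Sl|^{1-t})$; combined with (i) this yields $\mathcal{E}_V\cap\mathcal{D}(S_{\boldsymbol\mu})\subseteq\mathcal{D}(|\Sl|^{1-t})\cap\mathcal{D}(S_{\boldsymbol\mu})=\mathcal{D}(\Delta_t(\Sl))$. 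By Lemma \ref{lem:core} applied to $S_{\boldsymbol\mu}$, the set $\mathcal{E}_V\cap\mathcal{D}(S_{\boldsymbol\mu})$ is a core for $S_{\boldsymbol\mu}$, and on it $\Delta_t(\Sl)$ and $S_{\boldsymbol\mu}$ coincide; taking closures gives $S_{\boldsymbol\mu}=\overline{S_{\boldsymbol\mu}\restriction_{\mathcal{E}_V\cap\mathcal{D}(S_{\boldsymbol\mu})}}\subseteq\overline{\Delta_t(\Sl)}\subseteq S_{\boldsymbol\mu}$, whence equality.

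I expect the only genuinely delicate point to be part (ii). The pointwise computation in (i) is routine bookkeeping of the three shift/diagonal factors, but obtaining the closure requires placing a core of $S_{\boldsymbol\mu}$ inside $\mathcal{D}(\Delta_t(\Sl))$, not merely inside $\mathcal{D}(S_{\boldsymbol\mu})$. The decisive input is that the finitely supported vectors $\mathcal{E}_V$ sit in every modulus-power domain $\mathcal{D}(|\Sl|^{\alpha})$, which is exactly what allows the core furnished by Lemma \ref{lem:core} to survive the intersection with the extra constraint $\mathcal{D}(|\Sl|^{1-t})$ appearing in (i).
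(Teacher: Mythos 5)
Your proposal is correct and follows essentially the same route as the paper: the same factorization $\Delta_t(\Sl)=|\Sl|^t S_{\boldsymbol{\pi}}|\Sl|^{1-t}$ with the pointwise computation yielding $\mu_v f(\pa(v))$, the same regrouping of the summability condition to identify $\mathcal{D}(S_{\boldsymbol{\mu}})$, and the same use of $\mathcal{E}_V\subseteq\mathcal{D}(|\Sl|^{1-t})$ together with Lemma \ref{lem:core} to upgrade $\Delta_t(\Sl)\subseteq S_{\boldsymbol{\mu}}$ to equality of closures. Your write-up is in fact slightly more careful than the paper's at two points: you spell out the final chain $S_{\boldsymbol{\mu}}=\overline{S_{\boldsymbol{\mu}}\restriction_{\mathcal{E}_V\cap\mathcal{D}(S_{\boldsymbol{\mu}})}}\subseteq\overline{\Delta_t(\Sl)}\subseteq S_{\boldsymbol{\mu}}$, and you note separately that the case $t=1$ is trivial since $|\Sl|^{0}=I$, which Proposition \ref{prop:mod} (stated only for $\alpha>0$) does not literally cover.
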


\begin{proof}
 Since $\Delta_t(\Sl) = |\Sl|^t S_{\boldsymbol{\pi}} |\Sl|^{1-t}$, where $\pi$ is given by (\ref{eq:pi}), for any \mbox{$f\in\ell^2(V)$} we have
 \begin{equation}
  f\in\mathcal{D}(\Delta_t(\Sl)) \iff \left(f\in\mathcal{D}(|\Sl|^{1-t})\text{ and }S_{\boldsymbol{\pi}}|\Sl|^{1-t}f\in\mathcal{D}(|\Sl|^t)\right)
  \label{eq:DiffDD}
 \end{equation}
 Let $f\in\mathcal{D}(|\Sl|^{1-t})$. Then, using (\ref{eq:mod}) and (\ref{eq:pi}), we obtain for every $v\in V^\circ$
 \begin{align}
  (S_{\boldsymbol{\pi}}|\Sl|^{1-t}f)(v) &= \pi_v (|\Sl|^{1-t}f)(\pa(v)) =\notag \\[1ex]
  &= \begin{cases} \displaystyle\frac{\lambda_v}{\|\Sl e_{\pa(v)}\|} \|\Sl e_{\pa(v)}\|^{1-t} f(\pa(v)), & \text{if }\pa(v)\in V_{\boldsymbol{\lambda}}^+\\[1ex]
  0, & \text{otherwise} \end{cases} = \notag\\[1ex]
    &= \begin{cases}\displaystyle \frac{\lambda_v}{\|\Sl e_{\pa(v)}\|^t} f(\pa(v)), & \text{if }\pa(v)\in V_{\boldsymbol{\lambda}}^+\\[1ex]
  0, & \text{otherwise} \end{cases} .
  \label{eq:SpSl}
 \end{align}
 From the above equation and Proposition \ref{prop:mod} it follows that $f\in\mathcal{D}(\Delta_t(\Sl))$ if and only if
 \begin{align*}
  \infty &> \sum_{v\in V} \|\Sl e_v\|^{2t}|(S_{\boldsymbol{\pi}}|\Sl|^{1-t}f)(v)|^2 =\\
  &= \sum_{v\in \Chi(V^+_\lambda)} \|\Sl e_v\|^{2t} \left|\frac{\lambda_v}{\|\Sl e_{\pa(v)}\|^t} f(\pa(v))\right|^2 =\\[1ex]
  &= \sum_{u\in V^+_\lambda} |f(u)|^2 \sum_{v\in\Chi(u)} \left| \frac{\|\Sl e_v\|^t}{\|\Sl e_{\pa(v)}\|^t} \lambda_v\right|^2 =\\[1ex]
  &= \sum_{u\in V} |f(u)|^2 \sum_{v\in\Chi(u)} |\mu_v|^2,
 \end{align*}
 which is equivalent to $f\in\mathcal{D}(S_{\boldsymbol{\mu}})$. This, due to (\ref{eq:DiffDD}), proves (i).

 Let now $f\in\mathcal{D}(\Delta_t(\Sl))$. Then, using (\ref{eq:mod}) and (\ref{eq:SpSl}), we obtain
 \begin{align*}
  (\Delta_t(\Sl)f)(v) &= (|\Sl|^t S_{\boldsymbol{\pi}} |\Sl|^{1-t} f)(v) =\\[1ex]
  &= \|\Sl e_v\|^t (S_{\boldsymbol{\pi}} |\Sl|^{1-t}f)(v) =\\[1ex]
  &= \begin{cases}\displaystyle
              \frac{\|\Sl e_v\|^t}{\|\Sl e_{\pa(v)}\|^t} \lambda_v f(\pa(v)), & \text{if }\pa(v)\in V_{\boldsymbol{\lambda}}^+\\[1ex]
              0, & \text{otherwise} \end{cases} =\\[1ex]
  &= \mu_v f(\pa(v)) = (S_{\boldsymbol{\mu}} f)(v),
 \end{align*}
 which proves that $\Delta_t(\Sl)\subseteq S_{\boldsymbol{\mu}}$. Hence $\Delta_t(\Sl)$ is closable and $\overline{\Delta_t(\Sl)}\subseteq S_{\boldsymbol{\mu}}$. But from Proposition \ref{prop:mod} we know that $\mathcal{E}_V\subseteq\mathcal{D}(|\Sl|^{1-t})$. Part (ii) follows now from (i) and Lemma \ref{lem:core}.
\end{proof}

\begin{cor}
 Let $\Sl$ be a weighted shift on a directed tree $\mathfrak{T}=(V,E)$ and let $t\in(0,1]$. Suppose there exists a constant $\alpha>0$ such that $\|\Sl e_u\|\geq\alpha$ for every $u\in V$. Then $\Delta_t(\Sl)=S_{\boldsymbol{\mu}}$, where $\mu$ is given by \eqref{eq:mu}.
\end{cor}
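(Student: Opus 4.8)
The plan is to derive the corollary directly from Theorem \ref{thm:DtSl} by showing that, under the uniform lower bound $\|\Sl e_u\|\geq\alpha$, the intersection $\mathcal{D}(S_{\boldsymbol{\mu}})\cap\mathcal{D}(|\Sl|^{1-t})$ appearing in part (i) collapses onto the whole of $\ell^2(V)$, or at least onto $\mathcal{D}(S_{\boldsymbol{\mu}})$, and simultaneously that $S_{\boldsymbol{\mu}}$ is already closed (so that $\Delta_t(\Sl)$, which equals $S_{\boldsymbol{\mu}}$ on a core and is contained in $S_{\boldsymbol{\mu}}$, must coincide with it). By Theorem \ref{thm:DtSl}(ii) we already know $\overline{\Delta_t(\Sl)}=S_{\boldsymbol{\mu}}$, so it suffices to prove that $\Delta_t(\Sl)$ is itself closed, equivalently that $\mathcal{D}(\Delta_t(\Sl))=\mathcal{D}(S_{\boldsymbol{\mu}})$, i.e. that $\mathcal{D}(S_{\boldsymbol{\mu}})\subseteq\mathcal{D}(|\Sl|^{1-t})$.

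First I would invoke Proposition \ref{prop:mod}(i) to describe $\mathcal{D}(|\Sl|^{1-t})$ explicitly as those $f$ with $\sum_{u\in V}\|\Sl e_u\|^{2(1-t)}|f(u)|^2<\infty$. The key observation is that the hypothesis $\|\Sl e_u\|\geq\alpha>0$ together with $t\in(0,1]$, hence $1-t\in[0,1)$, controls this weight from above \emph{in terms of} the $S_{\boldsymbol{\mu}}$-weight. Indeed, the definition \eqref{eq:mu} gives $|\mu_v|=\frac{\|\Sl e_v\|^t}{\|\Sl e_{\pa(v)}\|^t}|\lambda_v|$ whenever $\pa(v)\in V_{\boldsymbol{\lambda}}^+$, and since $\|\Sl e_{\pa(v)}\|\geq\alpha$ one gets a lower bound $\sum_{v\in\Chi(u)}|\mu_v|^2\geq\alpha^{-2t}\sum_{v\in\Chi(u)}\|\Sl e_v\|^{2t}|\lambda_v|^2$. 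The plan is then to combine this with the elementary fact that $\|\Sl e_u\|^2=\sum_{v\in\Chi(u)}|\lambda_v|^2$ (Proposition \ref{prop:Sl}(ii)) and with the uniform bound $\|\Sl e_v\|\geq\alpha$ to bound $\|\Sl e_u\|^{2(1-t)}$ from above by a constant multiple of $\sum_{v\in\Chi(u)}|\mu_v|^2$, so that membership in $\mathcal{D}(S_{\boldsymbol{\mu}})$ forces membership in $\mathcal{D}(|\Sl|^{1-t})$.

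The main obstacle will be handling the exponents cleanly and dealing with the vertices where $\pa(v)\notin V_{\boldsymbol{\lambda}}^+$ (for which $\mu_v=0$). The cleanest route is probably to show that, when $\|\Sl e_u\|\geq\alpha$ for all $u$, the two weight systems governing $\mathcal{D}(S_{\boldsymbol{\mu}})$ and $\mathcal{D}(|\Sl|^{1-t})$ are comparable up to constants depending only on $\alpha$ and $t$: concretely, for $u\in V_{\boldsymbol{\lambda}}^+$ one has $\sum_{v\in\Chi(u)}|\mu_v|^2=\|\Sl e_{\pa}\|^{-2t}\sum_{v}\|\Sl e_v\|^{2t}|\lambda_v|^2$, and the factors $\|\Sl e_v\|^{2t}$ are bounded below by $\alpha^{2t}$, giving $\sum_{v\in\Chi(u)}|\mu_v|^2\geq(\alpha/\|\Sl e_u\|)^{2t}\|\Sl e_u\|^2$. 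Since $1-t\le 1$ and $\|\Sl e_u\|\geq\alpha$, the quantity $\|\Sl e_u\|^{2(1-t)}$ is dominated by $\alpha^{-2t}\|\Sl e_u\|^{2}$, which in turn is controlled by the previous display. Thus $\|\Sl e_u\|^{2(1-t)}|f(u)|^2\leq C\left(\sum_{v\in\Chi(u)}|\mu_v|^2\right)|f(u)|^2$ for a constant $C=C(\alpha,t)$, and summing over $u$ yields $\mathcal{D}(S_{\boldsymbol{\mu}})\subseteq\mathcal{D}(|\Sl|^{1-t})$.

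Having established this inclusion, I would conclude as follows. By Theorem \ref{thm:DtSl}(i), $\mathcal{D}(\Delta_t(\Sl))=\mathcal{D}(S_{\boldsymbol{\mu}})\cap\mathcal{D}(|\Sl|^{1-t})=\mathcal{D}(S_{\boldsymbol{\mu}})$, the last equality by the inclusion just proved. On this common domain the action of $\Delta_t(\Sl)$ agrees with that of $S_{\boldsymbol{\mu}}$ (this was shown in the proof of Theorem \ref{thm:DtSl}(ii), where $\Delta_t(\Sl)\subseteq S_{\boldsymbol{\mu}}$). Therefore $\Delta_t(\Sl)=S_{\boldsymbol{\mu}}$ as operators, which is the assertion. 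The only subtlety to flag is verifying the edge cases $t=1$ (where $1-t=0$, so $|\Sl|^{0}=I$ and $\mathcal{D}(|\Sl|^{1-t})=\ell^2(V)$, making the inclusion trivial) and the vertices outside $\Chi(V_{\boldsymbol{\lambda}}^+)$, but in both situations the estimate degenerates favourably and the conclusion is immediate.
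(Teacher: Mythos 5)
Your proposal is correct and follows essentially the same route as the paper's proof: via Theorem \ref{thm:DtSl} and the inclusion $\Delta_t(\Sl)\subseteq S_{\boldsymbol{\mu}}$ established in its proof, everything reduces to showing $\mathcal{D}(S_{\boldsymbol{\mu}})\subseteq\mathcal{D}(|\Sl|^{1-t})$, and you prove this by the same estimate the paper uses, namely $\sum_{v\in\Chi(u)}|\mu_v|^2\geq\alpha^{2t}\|\Sl e_u\|^{2-2t}$, obtained from $\|\Sl e_v\|\geq\alpha$ for the children $v\in\Chi(u)$ together with $\sum_{v\in\Chi(u)}|\lambda_v|^2=\|\Sl e_u\|^2$.

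Two slips in the write-up deserve mention, though neither is fatal because your third paragraph contains the correct computation. First, the inequality in your second paragraph, $\sum_{v\in\Chi(u)}|\mu_v|^2\geq\alpha^{-2t}\sum_{v\in\Chi(u)}\|\Sl e_v\|^{2t}|\lambda_v|^2$, is reversed: the hypothesis $\|\Sl e_{\pa(v)}\|\geq\alpha$ gives $\|\Sl e_{\pa(v)}\|^{-2t}\leq\alpha^{-2t}$, hence an \emph{upper} bound on $|\mu_v|^2$; the usable lower bound must come from $\|\Sl e_v\|\geq\alpha$ applied to the children, which is what you in fact do later. Second, the chain ``$\|\Sl e_u\|^{2(1-t)}\leq\alpha^{-2t}\|\Sl e_u\|^2$, which in turn is controlled by the previous display'' does not work: no constant multiple of $\sum_{v\in\Chi(u)}|\mu_v|^2$ dominates $\|\Sl e_u\|^2$ in general, since the only guaranteed lower bound is $\alpha^{2t}\|\Sl e_u\|^{2-2t}$ and $\|\Sl e_u\|$ need not be bounded above over $V$ (e.g.\ a classical weighted shift with oscillating weights $\geq 1$ defeats it). The detour is unnecessary: your own display $\sum_{v\in\Chi(u)}|\mu_v|^2\geq(\alpha/\|\Sl e_u\|)^{2t}\|\Sl e_u\|^2=\alpha^{2t}\|\Sl e_u\|^{2(1-t)}$ yields $\|\Sl e_u\|^{2(1-t)}\leq\alpha^{-2t}\sum_{v\in\Chi(u)}|\mu_v|^2$ directly (so $C=\alpha^{-2t}$), and summing against $|f(u)|^2$ over $u\in V$ completes the inclusion, exactly as in the paper.
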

\begin{proof}
 According to Theorem \ref{thm:DtSl}, it suffices to show that $\mathcal{D}(S_{\boldsymbol{\mu}})\subseteq\mathcal{D}(|\Sl|^{1-t})$. Let $f\in\mathcal{D}(S_{\boldsymbol{\mu}})$. Then
 \begin{align*}
  \infty &> \sum_{u\in V} \left(\sum_{v\in\Chi(u)} |\mu_v|^2\right) |f(u)|^2 =\\
  &= \sum_{u\in V} \left(\sum_{v\in\Chi(u)} \frac{\|\Sl e_v\|^{2t}}{\|\Sl e_{\pa(v)}\|^{2t}}|\lambda_v|^2\right) |f(u)|^2 \geq\\
  &\geq \sum_{u\in V} \left(\sum_{v\in\Chi(u)} \frac{\alpha^{2t}}{\|\Sl e_u\|^{2t}}|\lambda_v|^2\right) |f(u)|^2 =\\[1ex]
  &= \alpha^{2t} \sum_{u\in V} \frac{\|\Sl e_u\|^2}{\|\Sl e_u\|^{2t}} |f(u)|^2
  =\alpha^{2t} \sum_{u\in V} \|\Sl e_u\|^{2-2t} |f(u)|^2,
 \end{align*}
 thus $f\in\mathcal{D}(|\Sl|^{1-t})$.
\end{proof}

\begin{rem}
 If $t=1$, then $\mathcal{D}(|\Sl|^{1-t}|)=\mathcal{D}(I) = \ell^2(V)$. Hence $\Delta_1(\Sl) = S_{\boldsymbol{\mu}}$, where $\boldsymbol{\mu}$ is given by (\ref{eq:mu}). This is in general not true for $t\in(0,1)$, which is shown by the following example.
\end{rem}

\begin{ex}
 Let $t\in(0,1)$ and $\mathfrak{T}=(V,E)$, where $V=\mathbb{N}=\{0,1,\ldots\}$ and $E=\{(n,n+1)\,:\,n\in\mathbb{N}\}$.
 For any $f\in\ell^2(\mathbb{N})$ such that $f(2k)\neq 0$ for each $k\in\mathbb{N}$, let
 \begin{equation*}
  \begin{array}{rcl}
   \lambda_{2k} &=& 0,\\
   \lambda_{2k+1} &=& |f(2k)|^{\frac1{t-1}}
  \end{array}
 \end{equation*}
 for all $k\in\mathbb{N}$.
 Then $\mu_n=0$ for every $n\in\mathbb{N}$ and therefore $\mathcal{D}(S_{\boldsymbol{\mu}})=\ell^2(V)$. But $f\notin\mathcal{D}(|\Sl|^{1-t})$, because
 \begin{multline*}
  \sum_{u\in V} \|\Sl e_u\|^{2-2t}|f(u)|^2 = \sum_{n=0}^\infty |\lambda_{n+1}|^{2-2t}|f(n)|^2 =\\
  =\sum_{k=0}^\infty |\lambda_{2k+1}|^{2-2t}|f(2k)|^2 = \sum_{k=0}^\infty |f(2k)|^{\frac{2-2t}{t-1}}|f(2k)|^2
  = \sum_{k=0}^\infty 1 = \infty.
 \end{multline*}
 Hence $\Delta_t(\Sl) \subsetneq S_{\boldsymbol{\mu}}$. This example shows in particular that $\Delta_t(\Sl)$ may not be closed.
\end{ex}

\section{Aluthge transform of $\Sl^*$}

The following theorem provides a formula for the $t$-Aluthge transform of the adjoint of a weighted shift.

\begin{thm}
  Let $\Sl$ be a densely defined weighted shift on a directed tree $\mathfrak{T}=(V,E)$ and let $t\in(0,1]$. Then $\mathcal{E}_V\subseteq \mathcal{D}(\Delta_t(\Sl^*))$ and
  \begin{equation*}
  \Delta_t(\Sl^*)e_v = \begin{cases} \displaystyle\overline{\lambda_v}\frac {|\pi_{\pa(v)}|^2}{\mu_{\pa(v)}} \Sl e_{\pa^2(v)}& \text{if }v\in\Chi^2(V_{\boldsymbol{\lambda}}^+)\\[1ex]
   0&\text{if }v\in V\setminus\Chi^2(V_{\boldsymbol{\lambda}}^+) \end{cases},
  \end{equation*}
  where $\boldsymbol{\mu}=\{\mu_w\}_{w\in V^\circ}$ and $\boldsymbol{\pi}=\{\pi_w\}_{w\in V^\circ}$ are given by \eqref{eq:mu} and \eqref{eq:pi} respectively.
  \label{thm:DSl*}
\end{thm}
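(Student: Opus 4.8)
The plan is to compute $\Delta_t(\Sl^*) = |\Sl^*|^t\,S_{\boldsymbol{\pi}}^*\,|\Sl^*|^{1-t}$ directly on the basis vector $e_v$, moving through the three factors from right to left and using the explicit formulas already established. First I would apply $|\Sl^*|^{1-t}$ to $e_v$. By Theorem~\ref{thm:S*a}(iii) we have $|\Sl^*|^{1-t} = \bigoplus_{u\in V}\|\Sl e_u\|^{1-t} P_u$, and since $P_u e_v$ is nonzero only when $v\in\Chi(u)$, i.e.\ when $u=\pa(v)$ and $\pa(v)\in V_{\boldsymbol{\lambda}}^+$, the vector $|\Sl^*|^{1-t}e_v$ is a scalar multiple of $\Sl e_{\pa(v)}$ supported on $\Chi(\pa(v))$. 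This already restricts attention to $v\in\Chi(V_{\boldsymbol{\lambda}}^+)$; otherwise the expression vanishes and the second branch of the claimed formula holds trivially.

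Next I would apply $V=S_{\boldsymbol{\pi}}^*$, whose action on basis vectors follows from Proposition~\ref{prop:Sl*} applied to $S_{\boldsymbol{\pi}}$: namely $S_{\boldsymbol{\pi}}^* e_w = \overline{\pi_w}\,e_{\pa(w)}$ for $w\in V^\circ$. Since after the first step we have a vector supported on $\Chi(\pa(v))$, the isometry $V$ sends each $e_w$ with $w\in\Chi(\pa(v))$ to $\overline{\pi_w}e_{\pa(v)}$, collapsing the whole fiber onto the single vertex $\pa(v)$. Carrying the scalars through, the resulting vector is a multiple of $e_{\pa(v)}$, and the accumulated coefficient will involve $\overline{\lambda_v}$ together with $\|\Sl e_{\pa(v)}\|$-powers and the sum $\sum_{w\in\Chi(\pa(v))}|\pi_w|^2$. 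The key simplification here is recognizing that this last sum equals $1$ whenever $\pa(v)\in V_{\boldsymbol{\lambda}}^+$, by the definition \eqref{eq:pi} of $\boldsymbol{\pi}$ and the normalization $\|\Sl e_{\pa(v)}\|^2 = \sum_{w\in\Chi(\pa(v))}|\lambda_w|^2$.

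Finally I would apply $|\Sl^*|^t = \bigoplus_{u\in V}\|\Sl e_u\|^t P_u$ to the multiple of $e_{\pa(v)}$ obtained above. This step again forces the surviving contribution to come from $u=\pa^2(v)$ with $\pa^2(v)\in V_{\boldsymbol{\lambda}}^+$, i.e.\ $v\in\Chi^2(V_{\boldsymbol{\lambda}}^+)$, and produces a scalar multiple of $\Sl e_{\pa^2(v)}$, matching the support in the claimed first branch. Collecting all scalar factors and rewriting them in terms of $\pi_{\pa(v)}$ and $\mu_{\pa(v)}$ via \eqref{eq:pi} and \eqref{eq:mu} gives the coefficient $\overline{\lambda_v}\,|\pi_{\pa(v)}|^2/\mu_{\pa(v)}$. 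Throughout, membership $e_v\in\mathcal{D}(\Delta_t(\Sl^*))$ is automatic because each factor maps a finitely supported vector to a finitely supported one (the projections $P_u$ have finite-dimensional range on basis vectors, and the domain conditions of Theorem~\ref{thm:S*a}(i) reduce to finite sums).

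The main obstacle I anticipate is purely bookkeeping: tracking the exponents of $\|\Sl e_{\pa(v)}\|$ and $\|\Sl e_{\pa^2(v)}\|$ across the three applications and confirming that, after using $\sum_{w\in\Chi(\pa(v))}|\pi_w|^2=1$, the coefficient collapses to exactly $\overline{\lambda_v}|\pi_{\pa(v)}|^2/\mu_{\pa(v)}$ rather than some equivalent but differently-normalized expression. A secondary subtlety is handling vertices with $\|\Sl e_{\pa(v)}\|=0$ or $\|\Sl e_{\pa^2(v)}\|=0$ correctly so that the case distinction $\Chi^2(V_{\boldsymbol{\lambda}}^+)$ versus its complement comes out cleanly; here the conventions that $P_u=0$ when $\Sl e_u=0$ and that $\pi_w,\mu_w$ vanish off $\Chi(V_{\boldsymbol{\lambda}}^+)$ should resolve all edge cases.
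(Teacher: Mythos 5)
Your proposal follows essentially the same route as the paper's proof: write $\Delta_t(\Sl^*)=|\Sl^*|^t S_{\boldsymbol{\pi}}^*|\Sl^*|^{1-t}$, use the decomposition \eqref{eq:sum} to see that $|\Sl^*|^{1-t}e_v$ is a scalar multiple of $\Sl e_{\pa(v)}$, collapse the fiber $\Chi(\pa(v))$ onto $e_{\pa(v)}$ via $S_{\boldsymbol{\pi}}^*$, and apply \eqref{eq:sum} once more; your normalization $\sum_{w\in\Chi(\pa(v))}|\pi_w|^2=1$ is precisely the paper's computation \eqref{eq:szereg}, and the scalar bookkeeping does produce $\overline{\lambda_v}\,|\pi_{\pa(v)}|^2/\mu_{\pa(v)}$, so the computational core of your plan is sound and matches the paper.

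However, one of your justifications is false as stated, and it occurs exactly at the only point where the paper has to do nontrivial analytic work. You claim domain membership is automatic ``because each factor maps a finitely supported vector to a finitely supported one.'' This fails at the first step: $|\Sl^*|^{1-t}e_v$ is a multiple of $\Sl e_{\pa(v)}=\sum_{w\in\Chi(\pa(v))}\lambda_w e_w$, which is supported on $\Chi(\pa(v))$ --- a set that may well be infinite (in the paper's main example every $\Chi(u)$ is countably infinite). Consequently, ``collapsing the whole fiber,'' i.e.\ applying $S_{\boldsymbol{\pi}}^*$ term by term to this infinite series, is not a finite computation and requires an argument. The paper supplies it: an estimate showing that $\sum_{w\in\Chi(u)}\lambda_w S_{\boldsymbol{\pi}}^*e_w$ converges in $\ell^2(V)$, followed by an appeal to the closedness of $S_{\boldsymbol{\pi}}^*$ to conclude both that $\Sl e_{\pa(v)}\in\mathcal{D}(S_{\boldsymbol{\pi}}^*)$ and that the series may be summed inside the operator. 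Your wording (``the isometry $V$'') suggests the alternative repair: $S_{\boldsymbol{\pi}}^*$ is the adjoint of the partial isometry in the polar decomposition of $\Sl$, hence bounded and everywhere defined, so continuity justifies the term-by-term application. Either patch makes your argument complete; note also that after this middle step the vector is again a multiple of a single basis vector $e_{\pa(v)}$, so the final application of \eqref{eq:sum} really is a finite matter, as in the paper's use of \eqref{eq:Sl*aev}. A last small caveat: $v\notin\Chi(V_{\boldsymbol{\lambda}}^+)$ does not imply $v\notin\Chi^2(V_{\boldsymbol{\lambda}}^+)$ (the conditions concern $\pa(v)$ and $\pa^2(v)$ respectively), so your dismissal of that case lands in the degenerate situation $\lambda_v=0=\mu_{\pa(v)}$, where the claimed formula reads $0/0$ and the transform is in fact $0$; the paper's own statement glosses over the same point, so this is a wrinkle in the theorem rather than in your proof.
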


\begin{proof}
 Let $u,v\in V$ be any vertices and let $P_u$ be as in Theorem \ref{thm:S*a}. Then, by \eqref{eq:Puf},
 \begin{equation*}
  P_u e_v = \begin{cases}\displaystyle \frac{\overline{\lambda_v}}{\|\Sl e_u\|^2} \displaystyle\sum_{w\in\Chi(u)}\lambda_we_w,&\text{if }v\in\Chi(u)\\
  0&\text{if }v\in V\setminus\Chi(u). \end{cases}.
 \end{equation*}
 Hence, from Theorem \ref{thm:S*a} (iii) it follows for every $\alpha>0$ that $e_v\in\mathcal{D}(|\Sl^*|^\alpha)$ and the following equality holds:
 \begin{equation}
  |\Sl^*|^\alpha e_v = \begin{cases}\displaystyle
                   \frac{\overline{\lambda_v}}{\|\Sl e_{\pa(v)}\|^{2-\alpha}} \sum_{w\in\Chi(\pa(v))}\lambda_we_w,   &\text{if }v\in V^\circ,\\
                   0,&\text{if }v=\rot. \end{cases}.
   \label{eq:Sl*aev}
 \end{equation}

 Let now $\Sl^* = U|\Sl^*|$ be the polar decomposition of $\Sl^*$. Then, by Proposition \ref{prop:Spi}, $U=S_{\boldsymbol{\pi}}^*$, where $\boldsymbol{\pi}=\{\pi_u\}_{u\in V}$ is given by \eqref{eq:pi}. From Proposition \ref{prop:Sl*} it follows that for every $w\in V^\circ$
 \begin{equation}
  S_{\boldsymbol{\pi}}^* e_w = \overline{\pi_w} e_{\pa(w)} = \frac{\overline{\lambda_w}}{\|\Sl e_{\pa(w)}\|}e_{\pa(w)}.
  \label{eq:Spew}
 \end{equation}
 Take $u\in V$. From \eqref{eq:Spew} we obtain
 \begin{align*}
  \sum_{w\in\Chi(u)} \|\lambda_w S_{\boldsymbol{\pi}}^*e_w\|^2 &= \sum_{w\in\Chi(u)} \frac{|\lambda_w|^4}{\|\Sl e_u\|^2}
  \leq \left(\sum_{w\in\Chi(u)} \frac{|\lambda_w|^2}{\|\Sl e_u\|} \right)^2 =\nonumber\\
  &= \left(\frac {\|\Sl e_u\|^2}{\|\Sl e_u\|}\right)^2 = \|\Sl e_u\|^2
 \end{align*}
 Hence the series $\sum_{w\in\Chi(u)} \lambda_w S_{\boldsymbol{\pi}}^*e_w$ is convergent in $\ell^2(V)$ and by (\ref{eq:Spew})
 \begin{eqnarray}
  \sum_{w\in\Chi(u)} \lambda_w S_{\boldsymbol{\pi}}^*e_w &=&
   \sum_{w\in\Chi(u)} \frac {|\lambda_w|^2}{\|\Sl e_{\pa(w)}\|} e_{\pa(w)}=\notag\\
   &=&\sum_{w\in\Chi(u)} \frac {|\lambda_w|^2}{\|\Sl e_u\|} e_u = \|\Sl e_u\| e_u.
   \label{eq:szereg}
 \end{eqnarray}
 Since the series $\sum_{w\in\Chi(u)} \lambda_w e_w = \Sl e_u$ is also convergent and $S_{\boldsymbol{\pi}}^*$ is a closed operator, it follows from (\ref{eq:Sl*aev}) and (\ref{eq:szereg}) that $|\Sl^*|^te_v\in\mathcal{D}(S_{\boldsymbol{\pi}}^*)$ for any $v\in V^\circ$ and
 \begin{align*}
  S_{\boldsymbol{\pi}}^*|\Sl^*|^{1-t} e_v &= \frac{\overline{\lambda_v}}{\|\Sl e_{\pa(v)}\|^{2-(1-t)}} S_{\boldsymbol{\pi}}^*\left(\sum_{w\in\Chi(\pa(v))}\lambda_we_w \right) =\\
  &=\frac{\overline{\lambda_v}}{\|\Sl e_{\pa(v)}\|^{1+t}} \sum_{w\in\Chi(\pa(v))}\lambda_w S_{\boldsymbol{\pi}}^* e_w  =\\
  &=\frac{\overline{\lambda_v}}{\|\Sl e_{\pa(v)}\|^{t}} e_{\pa(v)}
 \end{align*}
 Using (\ref{eq:Sl*aev}) again, we get for any $v\in\Chi(V^\circ)$
 \begin{multline*}
  |\Sl^*|^tS_{\boldsymbol{\pi}}^*|\Sl^*|^{1-t} e_v =\\
  = \frac{ \overline{\lambda_v}}{\|\Sl e_{\pa(v)}\|^{t}}\cdot \frac{\overline{\lambda_{\pa(v)}}}{ \|\Sl e_{\pa^2(v)}\|^{2-t}} \sum_{w\in\Chi(\pa^2(v))} \lambda_w e_w =\\
  = \overline{\lambda_v}\frac {|\pi_{\pa(v)}|^2}{\mu_{\pa(v)}} \Sl e_{\pa^2(v)}
 \end{multline*}
 and clearly $|\Sl^*|^{1-t}S_{\boldsymbol{\pi}}^*|\Sl^*|^t e_v=0$ for every $v\in \Chi(\rot)\cup\{\rot\}$.
\end{proof}

\section{An example of an operator with trivial Aluthge transform}

In this section we construct a weighted shift $\Sl$ with the following properties: $\Sl$ is densely defined, injective and hyponormal, while $\mathcal{D}(\Delta_t(\Sl))=\{0\}$ for every $t\in(0,1]$ and $\Delta_t(\Sl^*)$ is not closable for any $t\in(0,1)$. We also show that such an example can be constructed in the class of composition operators.
\bigskip

For any sequence $v: \mathbb{Z}\to\mathbb{Z}_+\cup\{-1\}$ we define
\begin{align*}
 m(v)&:= \inf\{n\in\mathbb{Z}\,:\,v_n\neq0\}-2, \\
 M(v)&:= \sup\{n\in\mathbb{Z}\,:\,v_n>-1\}.
\end{align*}
Let
\begin{multline}
 V=\{ v:\mathbb{Z}\to\mathbb{Z}_+\cup\{-1\}\,:\,m(v)>-\infty, M(v)<\infty\\
  \text{ and }v_n>-1\text{ for }n\leq M(v)\},
 \label{eq:V}
\end{multline}
\begin{equation}
 E=\{ (u,v)\in V\times V\,:\,M(v)=M(u)+1\text{ and }u_n=v_n\text{ for }n\leq M(u)\}.
 \label{eq:E}
\end{equation}
Then $\mathfrak{T}=(V,E)$ is a rootless directed tree, such that for every $u\in V$ the set $\Chi(u)$ is countable. Vertices of $V$ are sequences of the form
 $$u=(\ldots,0,0,u_{m(u)},\ldots,u_{M(u)-1},u_{M(u)},-1,-1,\ldots)$$
  and for a vertex $u$ given by the above formula we have
 \begin{align*}
  \pa(u) &= (\ldots,0,u_{m(u)},\ldots,u_{M(u)-1},-1,-1,-1,\ldots),\\
  \Chi(u) &= \{ (\ldots,0,u_{m(u)},\ldots,u_{M(u)-1},u_{M(u)},n,-1,\ldots)\,:\,n\in\mathbb{Z}_+ \}.
 \end{align*}

For any $v=(\ldots,0,v_{m(v)},\ldots,v_{M(v)},-1,\ldots)\in V$ let
\begin{equation}
 \lambda_v= \frac{ 2^{v_{m(v)}+\ldots+v_{M(v)-1}} }{v_{M(v)}+1}
 \label{eq:lambda}
\end{equation}

In this section $\Sl$ will always stand for the weighted shift on $\mathfrak{T}$ with weights given by (\ref{eq:lambda}).
\bigskip

We start by proving that $\Sl$ is densely defined. This follows from Proposition~\ref{prop:Sl} and the following:

\begin{prop}
 For every $u\in V$, $e_u\in\mathcal{D}(\Sl)$ and
 \begin{equation*}
  \|\Sl e_u\| = 2^{u_{m(u)}+\ldots+u_{M(u)}}\gamma,
 \end{equation*}
 where $\gamma=\left(\sum_{n=1}^\infty n^{-2}\right)^{\frac12}$.
 \label{prop:Sleu}
\end{prop}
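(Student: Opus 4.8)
The plan is to compute $\|\Sl e_u\|$ directly using the formula from Proposition~\ref{prop:Sl}(ii), namely $\|\Sl e_u\|^2 = \sum_{v\in\Chi(u)} |\lambda_v|^2$, and to verify finiteness along the way so that $e_u\in\mathcal{D}(\Sl)$ follows from Proposition~\ref{prop:Sl}(ii) as well. The first step is to describe $\Chi(u)$ explicitly: given a vertex $u=(\ldots,0,u_{m(u)},\ldots,u_{M(u)},-1,\ldots)$ with $M:=M(u)$, each child $v\in\Chi(u)$ is obtained by appending an arbitrary $n\in\mathbb{Z}_+$ in position $M+1$, so that $v_k=u_k$ for $k\leq M$, $v_{M+1}=n$, and $v_k=-1$ for $k>M+1$. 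In particular $M(v)=M+1$, so the child $v$ is indexed by $n=v_{M(v)}\in\mathbb{Z}_+$.

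Next I would substitute this child into the weight formula \eqref{eq:lambda}. For the child $v$ corresponding to $n$, the exponent in the numerator runs over positions $m(v)$ through $M(v)-1$; since $m(v)=m(u)$ and $M(v)-1=M$, and the entries $v_k$ for $m(u)\leq k\leq M$ coincide with $u_k$, this exponent equals $u_{m(u)}+\ldots+u_{M}$, which is \emph{independent of $n$}. The denominator is $v_{M(v)}+1=n+1$. Hence
\begin{equation*}
 \lambda_v = \frac{2^{u_{m(u)}+\ldots+u_{M(u)}}}{n+1},\qquad n=v_{M(v)}\in\mathbb{Z}_+.
\end{equation*}
Squaring and summing over all children, i.e.\ over all $n\in\mathbb{Z}_+$, gives
\begin{equation*}
 \|\Sl e_u\|^2 = \sum_{n=0}^\infty \frac{4^{u_{m(u)}+\ldots+u_{M(u)}}}{(n+1)^2} = 4^{u_{m(u)}+\ldots+u_{M(u)}}\sum_{n=1}^\infty \frac{1}{n^2},
\end{equation*}
where I reindexed $n+1\mapsto n$. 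Since $\sum_{n=1}^\infty n^{-2}=\gamma^2<\infty$, the sum converges, which by Proposition~\ref{prop:Sl}(ii) shows $e_u\in\mathcal{D}(\Sl)$, and taking square roots yields $\|\Sl e_u\|=2^{u_{m(u)}+\ldots+u_{M(u)}}\gamma$, as claimed.

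I expect no serious obstacle here; the proof is essentially a careful bookkeeping of which coordinate is summed over. The one point requiring attention is the observation that the factor $2^{u_{m(u)}+\ldots+u_{M(u)-1}}$ appearing in the definition of $\lambda_v$ for the child — where the sum stops one short of $M(v)-1$ relative to $u$ — correctly collapses to the sum $u_{m(u)}+\ldots+u_{M(u)}$ over $u$'s own coordinates, because $M(v)-1=M(u)$ and $v_{M(u)}=u_{M(u)}$. Verifying that the numerator exponent is genuinely constant across all children (so that it factors out of the sum) is the crux, and it hinges precisely on the structure of $\Chi(u)$ recalled in the first step.
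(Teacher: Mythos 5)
Your proof is correct and takes essentially the same route as the paper: plug the weight formula \eqref{eq:lambda} into $\|\Sl e_u\|^2=\sum_{v\in\Chi(u)}|\lambda_v|^2$ from Proposition~\ref{prop:Sl}(ii), observe that the numerator exponent collapses to $u_{m(u)}+\ldots+u_{M(u)}$ independently of the child, and sum $(n+1)^{-2}$ over $n\in\mathbb{Z}_+$ to obtain $\gamma^2$. (One pedantic remark: the identity $m(v)=m(u)$ can fail in the degenerate case where all entries of $u$ and the appended entry are zero, but since the discrepant terms are then zeros, the exponent identity—and hence your computation—still holds, exactly as in the paper's own proof.)
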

\begin{proof}
 From \eqref{eq:lambda} we get
 \begin{align*}
  \sum_{v\in\Chi(u)} |\lambda_v|^2 &= \sum_{v\in\Chi(u)} \frac {2^{2(v_{m(v)}+\ldots+v_{M(v)-1})}}{(v_{M(v)}+1)^2}=\\
  &= \sum_{v\in\Chi(u)} \frac {2^{2(u_{m(u)}+\ldots+u_{M(u)})}}{(v_{M(v)}+1)^2} = 2^{2(u_{m(u)}+\ldots+u_{M(u)})}\gamma^2.
 \end{align*}
 The claim follows now from Proposition \ref{prop:Sl} (ii).
\end{proof}

To show hyponormality of $\Sl$ we use Theorem \ref{thm:hyp}.

\begin{prop}
 Operator $\Sl$ is hyponormal.
 \label{prop:Slhyp}
\end{prop}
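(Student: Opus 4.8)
The plan is to verify the two hyponormality conditions from Theorem~\ref{thm:hyp} directly, using the explicit formula for $\|\Sl e_u\|$ from Proposition~\ref{prop:Sleu}. Condition~\eqref{eq:hyp1} is immediate: since every weight $\lambda_v$ given by \eqref{eq:lambda} is strictly positive, we have $\|\Sl e_v\|>0$ for all $v$, so the hypothesis ``$\|\Sl e_v\|=0$'' in \eqref{eq:hyp1} is never satisfied and the condition holds vacuously. In particular $\Chi_{\boldsymbol{\lambda}}^+(u)=\Chi(u)$ for every $u$. It remains to check the genuine inequality \eqref{eq:hyp}, namely $\sum_{v\in\Chi(u)} |\lambda_v|^2/\|\Sl e_v\|^2 \leq 1$ for every $u\in V$.

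The main computation is to evaluate this sum using the formulas already established. For a fixed $u$ with $M(u)=M$, a child $v\in\Chi(u)$ has the form $v=(\ldots,u_{M},n,-1,\ldots)$ for some $n\in\mathbb{Z}_+$, so $M(v)=M+1$, $v_{M(v)}=n$, and the entries of $v$ up through index $M$ coincide with those of $u$. By Proposition~\ref{prop:Sleu} applied to $v$,
\begin{equation*}
 \|\Sl e_v\|^2 = 2^{2(u_{m(u)}+\ldots+u_{M}+n)}\gamma^2,
\end{equation*}
while from \eqref{eq:lambda},
\begin{equation*}
 |\lambda_v|^2 = \frac{2^{2(u_{m(u)}+\ldots+u_{M})}}{(n+1)^2}.
\end{equation*}
Taking the ratio, the factor $2^{2(u_{m(u)}+\ldots+u_{M})}$ cancels, leaving $|\lambda_v|^2/\|\Sl e_v\|^2 = 1/\big((n+1)^2 2^{2n}\gamma^2\big)$, which depends only on $n$.

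Summing over $v\in\Chi(u)$, equivalently over $n\in\mathbb{Z}_+$, gives
\begin{equation*}
 \sum_{v\in\Chi(u)} \frac{|\lambda_v|^2}{\|\Sl e_v\|^2} = \frac{1}{\gamma^2}\sum_{n=0}^\infty \frac{1}{(n+1)^2\,2^{2n}} = \frac{1}{\gamma^2}\sum_{k=1}^\infty \frac{1}{k^2\,4^{k-1}},
\end{equation*}
after reindexing $k=n+1$. The final step is to show this quantity is at most $1$, i.e.\ that $\sum_{k=1}^\infty k^{-2}4^{-(k-1)}\leq \gamma^2 = \sum_{k=1}^\infty k^{-2}$. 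This is transparent termwise: for every $k\geq 1$ one has $4^{-(k-1)}\leq 1$, so each summand on the left is dominated by the corresponding summand on the right, and the inequality follows at once (with equality only in the $k=1$ term). The only mild subtlety is bookkeeping the exponents and confirming the cancellation is exact and independent of $u$; once that is in place, the bound is a one-line termwise comparison, so no step here is a serious obstacle.
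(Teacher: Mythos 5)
Your proof is correct and follows essentially the same route as the paper: both verify the two conditions of Theorem~\ref{thm:hyp}, use Proposition~\ref{prop:Sleu} to compute $|\lambda_v|^2/\|\Sl e_v\|^2 = 1/\bigl((v_{M(v)}+1)^2\,2^{2v_{M(v)}}\gamma^2\bigr)$, and conclude by the termwise comparison $2^{-2v_{M(v)}}\leq 1$ against $\gamma^2=\sum_{k\geq1}k^{-2}$. The only difference is cosmetic: you parametrize the children of $u$ by $n=v_{M(v)}\in\mathbb{Z}_+$ and reindex, while the paper keeps the sum indexed by $v\in\Chi(u)$.
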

\begin{proof}
 From Proposition \ref{prop:Sleu} it follows that $\|\Sl e_v\|>0$ for every $v\in V$, so \eqref{eq:hyp1} is satisfied trivially.
 As for \eqref{eq:hyp}, for any $u\in V$ we have $\Chi_\lambda^+(u) = \Chi(u)$ and
 \begin{align*}
  \sum_{v\in\Chi(u)} \frac{|\lambda_v|^2}{\|\Sl e_v\|^2} &= \sum_{v\in\Chi(u)}
   \frac{2^{2(v_{m(v)}+\ldots+v_{M(v)-1})}}{(v_{M(v)}+1)^2\cdot 2^{2(v_{m(v)}+\ldots+v_{M(v)})} \gamma^2} =\\
   &= \frac1{\gamma^2} \sum_{v\in\Chi(u)} \frac1{(v_{M(v)}+1)^2 2^{2v_{M(v)}}} < 1,
 \end{align*}
 because
 \begin{equation*}
  \sum_{v\in\Chi(u)} \frac1{(v_{M(v)}+1)^2 2^{2v_{M(v)}}} < \sum_{v\in\Chi(u)} \frac1{(v_{M(v)}+1)^2} = \gamma^2.
 \end{equation*}
 This completes the proof.
\end{proof}

 In turn we show that the Aluthge transform of $\Sl$ has trivial domain. Moreover, $t$-Aluthge transform of $\Sl$ has trivial domain for arbitrarily small $t$.

\begin{prop}
 For any $t\in(0,1]$ the domain of $\Delta_t(\Sl)$ is $\{0\}$.
 \label{prop:trivial}
\end{prop}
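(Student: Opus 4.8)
The plan is to use Theorem \ref{thm:DtSl}(i), which identifies $\mathcal{D}(\Delta_t(\Sl)) = \mathcal{D}(S_{\boldsymbol{\mu}})\cap\mathcal{D}(|\Sl|^{1-t})$ with $\boldsymbol{\mu}$ given by \eqref{eq:mu}. In particular $\mathcal{D}(\Delta_t(\Sl))\subseteq\mathcal{D}(S_{\boldsymbol{\mu}})$, so it suffices to prove that $\mathcal{D}(S_{\boldsymbol{\mu}})=\{0\}$. By the domain description \eqref{eq:DSl} applied to $S_{\boldsymbol{\mu}}$, a function $f$ lies in $\mathcal{D}(S_{\boldsymbol{\mu}})$ if and only if $\sum_{u\in V}\left(\sum_{v\in\Chi(u)}|\mu_v|^2\right)|f(u)|^2<\infty$. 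Hence the whole statement follows once I show that the inner sum $\sum_{v\in\Chi(u)}|\mu_v|^2$ is infinite for every $u\in V$: the defining series can then be finite only if $f(u)=0$ for all $u$, i.e.\ $f=0$.

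To evaluate this inner sum, I first read off $\mu_v$ from \eqref{eq:mu}. Since Proposition \ref{prop:Sleu} gives $\|\Sl e_u\|>0$ for every $u$, every vertex belongs to $V_{\boldsymbol{\lambda}}^+$, so the nontrivial branch of \eqref{eq:mu} always applies and $\mu_v=\bigl(\|\Sl e_v\|/\|\Sl e_{\pa(v)}\|\bigr)^t\lambda_v$. Fix $u$ and recall that the children of $u$ are exactly the sequences $v$ obtained from $u$ by appending one coordinate $v_{M(v)}=n$, $n\in\mathbb{Z}_+$. For such a child, Proposition \ref{prop:Sleu} yields $\|\Sl e_v\|=2^{n}\|\Sl e_u\|$, because the exponent $v_{m(v)}+\cdots+v_{M(v)}$ exceeds $u_{m(u)}+\cdots+u_{M(u)}$ exactly by the new coordinate $v_{M(v)}=n$ (the lower terms being unchanged or zero), while \eqref{eq:lambda} gives $\lambda_v=2^{u_{m(u)}+\cdots+u_{M(u)}}/(n+1)$. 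Consequently $\mu_v=2^{nt}\lambda_v$.

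Summing over the children then gives $\sum_{v\in\Chi(u)}|\mu_v|^2 = 2^{2(u_{m(u)}+\cdots+u_{M(u)})}\sum_{n=0}^\infty \tfrac{2^{2nt}}{(n+1)^2}$, and the series on the right diverges for every $t\in(0,1]$ since its terms grow exponentially in $n$. This establishes the required divergence for all $u$ simultaneously and completes the argument. I do not expect a genuine obstacle; the only thing to get right is the bookkeeping of the exponent sums, namely that passing from $u$ to a child $v$ with last coordinate $n$ multiplies $\|\Sl e_u\|$ by exactly $2^{n}$ while multiplying $\lambda_v$ by the factor $1/(n+1)$. The conceptual content is that the $t$-Aluthge transform amplifies the weight toward each child by $\bigl(\|\Sl e_v\|/\|\Sl e_{\pa(v)}\|\bigr)^t=2^{nt}$, which is unbounded over $\Chi(u)$, and this exponential amplification overwhelms the summability of $\sum_n (n+1)^{-2}$ that made $\Sl$ itself well defined on each $e_u$.
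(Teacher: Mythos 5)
Your proposal is correct and follows essentially the same route as the paper: both reduce the problem via Theorem \ref{thm:DtSl} to showing that $\sum_{v\in\Chi(u)}|\mu_v|^2=\infty$ for every $u\in V$, using the identical computation that a child $v$ of $u$ with last coordinate $n$ has $\mu_v = 2^{nt}\lambda_v$, so the inner sum is a multiple of the divergent series $\sum_{n=0}^\infty 2^{2nt}(n+1)^{-2}$ (cf.\ \eqref{eq:muv}). The only difference is in the concluding step, and it is minor: you read $\mathcal{D}(S_{\boldsymbol{\mu}})=\{0\}$ directly off the domain description \eqref{eq:DSl}, whereas the paper first concludes $e_u\notin\mathcal{D}(S_{\boldsymbol{\mu}})$ for all $u$ (Proposition \ref{prop:Sl}) and then invokes the core property of Lemma \ref{lem:core}; both versions of this step are valid.
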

\begin{proof}
 Let $t\in(0,1]$. From Theorem \ref{thm:DtSl} and Proposition \ref{prop:Sleu} we get \mbox{$\Delta_t(\Sl)\subseteq S_{\boldsymbol{\mu}}$}, where
 \begin{align}
  \mu_v &= \frac{\|\Sl e_v\|^t}{\|\Sl e_{\pa(v)}\|^t}\lambda_v =\notag\\
  &= \frac{2^{t(v_{m(v)}+\ldots+v_{M(v)})}\gamma^t}{2^{t(v_{m(v)}+\ldots+v_{M(v)-1})}\gamma^t}\cdot \frac{2^{v_{m(v)}+\ldots+v_{M(v)-1}}}{v_{M(v)}+1}=\notag\\
   &= \frac {2^{v_{m(v)}+\ldots+v_{M(v)-1}+tv_{M(v)}}}{v_{M(v)}+1}.
   \label{eq:muv}
 \end{align}
 Hence for any $u\in V$ we have
 \begin{align*}
  \sum_{v\in\Chi(u)} |\mu_v|^2 &= \sum_{v\in\Chi(u)} \frac{ 2^{2(v_{m(v)}+\ldots+v_{M(v)-1}+tv_{M(v)})}}{(v_{M(v)}+1)^2} =\\
  &= 2^{2(u_{m(u)}+\ldots+u_{M(u)})} \sum_{v\in\Chi(u)} \frac{2^{2tv_{M(v)}}}{(v_{M(v)}+1)^2} = \infty,
 \end{align*}
 and therefore, by Proposition \ref{prop:Sl}, $e_u\notin\mathcal{D}(S_{\boldsymbol{\mu}})$. The claim follows now from Lemma \ref{lem:core}.
\end{proof}

The fact that $\Delta_t(\Sl^*)$ is not closable will follow from the lemma below:

\begin{lem}
 For any $t\in(0,1)$ the operator $\Delta_t(\Sl^*)$ is densely defined and
 \begin{equation*}
 \mathcal{D}(\Delta_t(\Sl^*)^*) = \mathcal{N}(\Delta_t(\Sl^*)^*) = \mathcal{N}(\Sl^*).
\end{equation*}
\end{lem}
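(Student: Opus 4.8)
The plan is to prove the stated chain by closing the loop of inclusions
$$\mathcal{N}(\Sl^*)\subseteq\mathcal{N}(\Delta_t(\Sl^*)^*)\subseteq\mathcal{D}(\Delta_t(\Sl^*)^*)\subseteq\mathcal{N}(\Sl^*),$$
the middle inclusion being automatic. That $\Delta_t(\Sl^*)$ is densely defined is immediate from Theorem \ref{thm:DSl*}: it gives $\mathcal{E}_V\subseteq\mathcal{D}(\Delta_t(\Sl^*))$, and $\mathcal{E}_V$ is dense in $\ell^2(V)$, so the adjoint is well defined. Throughout I would write $A:=\Delta_t(\Sl^*)=|\Sl^*|^tU|\Sl^*|^{1-t}$, where $U=S_{\boldsymbol{\pi}}^*$ is the partial isometry of the polar decomposition $\Sl^*=U|\Sl^*|$. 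Here $V_{\boldsymbol{\lambda}}^+=V$ by Proposition \ref{prop:Sleu}, and since $\mathfrak{T}$ is rootless $\Chi^2(V_{\boldsymbol{\lambda}}^+)=V$, so the formula of Theorem \ref{thm:DSl*} applies to every $e_v$.

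For the first inclusion I would argue abstractly, using nothing about the concrete weights. Recall $\mathcal{N}(\Sl^*)=\mathcal{N}(|\Sl^*|)$; since $|\Sl^*|$ is nonnegative and self-adjoint, every $g\in\mathcal{N}(|\Sl^*|)$ lies in $\mathcal{D}(|\Sl^*|^t)$ with $|\Sl^*|^tg=0$. For arbitrary $f\in\mathcal{D}(A)$ one has $\psi:=U|\Sl^*|^{1-t}f\in\mathcal{D}(|\Sl^*|^t)$ and $Af=|\Sl^*|^t\psi$, so self-adjointness of $|\Sl^*|^t$ yields
\begin{equation*}
 \langle Af,g\rangle=\langle|\Sl^*|^t\psi,g\rangle=\langle\psi,|\Sl^*|^tg\rangle=0.
\end{equation*}
Thus $f\mapsto\langle Af,g\rangle$ is the zero functional, whence $g\in\mathcal{D}(A^*)$ and $A^*g=0$, i.e. $g\in\mathcal{N}(A^*)$.

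The essential part is the last inclusion $\mathcal{D}(A^*)\subseteq\mathcal{N}(\Sl^*)$, and this is where the specific construction of $\boldsymbol{\lambda}$ enters. Given $g\in\mathcal{D}(A^*)$, set $h:=A^*g$ and test the defining identity on the basis vectors $e_v\in\mathcal{E}_V\subseteq\mathcal{D}(A)$. Writing $c(v):=\overline{\lambda_v}\,|\pi_{\pa(v)}|^2/\mu_{\pa(v)}$ and $T_u:=\sum_{w\in\Chi(u)}\lambda_w\overline{g(w)}$ (absolutely convergent for every $g\in\ell^2(V)$), Theorem \ref{thm:DSl*} gives $\overline{h(v)}=\langle Ae_v,g\rangle=c(v)\,T_{\pa^2(v)}$. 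Summing $|h(v)|^2$ over $v$ and grouping by the grandparent $u=\pa^2(v)$ produces
\begin{equation*}
 \infty>\|h\|^2=\sum_{u\in V}|T_u|^2\sum_{v\in\Chi^2(u)}|c(v)|^2.
\end{equation*}
Evaluating the inner sum by grouping $v\in\Chi^2(u)$ according to the parent $p\in\Chi(u)$, and using Proposition \ref{prop:Sleu} together with $\|\Sl e_p\|=2^{n}\|\Sl e_u\|$ for the child $p$ indexed by $n\in\mathbb{Z}_+$, reduces it to $\gamma^{-2}\sum_{n=0}^\infty 2^{2(1-t)n}/(n+1)^2$, which diverges precisely because $t<1$. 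Hence $\sum_{v\in\Chi^2(u)}|c(v)|^2=\infty$ for every $u$, and finiteness of $\|h\|^2$ forces $T_u=0$ for all $u$. By Proposition \ref{prop:Sl*} this means $g\in\mathcal{D}(\Sl^*)$ and $\Sl^*g=0$, i.e. $g\in\mathcal{N}(\Sl^*)$.

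Combining the three inclusions closes the loop and gives $\mathcal{D}(A^*)=\mathcal{N}(A^*)=\mathcal{N}(\Sl^*)$. The main obstacle is the last inclusion: one must recognize that membership in $\mathcal{D}(A^*)$ is controlled, grandparent by grandparent, by convergence of the series $\sum_{v\in\Chi^2(u)}|c(v)|^2$, and then carry out the (routine but decisive) estimate showing this series diverges for every $u$ exactly when $t<1$, the exponential factor $2^{2(1-t)n}$ being what destroys summability.
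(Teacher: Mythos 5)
Your proof is correct, and its decisive part coincides with the paper's: both rest on the formula of Theorem \ref{thm:DtSl} -- more precisely, of Theorem \ref{thm:DSl*} -- for $\Delta_t(\Sl^*)e_v$ together with the explicit weights \eqref{eq:lambda}, and in both the engine is divergence of $\sum_{k\geq 0}2^{2(1-t)k}/(k+1)^2$ when $t<1$. The packaging of the hard inclusion $\mathcal{D}(\Delta_t(\Sl^*)^*)\subseteq\mathcal{N}(\Sl^*)$ differs only cosmetically: the paper argues by contradiction, fixing a single $u$ with $(\Sl^*f)(u)\neq 0$ and summing only over the special grandchildren $v^{(k)}=(\ldots,u_{m(u)},\ldots,u_{M(u)},k,0,-1,\ldots)$, while you sum $\|\Delta_t(\Sl^*)^*g\|^2$ over all of $V$ grouped by grandparent, your extra summation over the last index contributing only the constant $\gamma^2$; this is the same estimate. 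Where you genuinely depart from the paper is the inclusion $\mathcal{N}(\Sl^*)\subseteq\mathcal{N}(\Delta_t(\Sl^*)^*)$. The paper reads it off its pointwise formula for $(\Delta_t(\Sl^*)^*f)(v)$, but that formula is derived under the hypothesis $f\in\mathcal{D}(\Delta_t(\Sl^*)^*)$, so taken literally it yields only $\mathcal{N}(\Sl^*)\cap\mathcal{D}(\Delta_t(\Sl^*)^*)\subseteq\mathcal{N}(\Delta_t(\Sl^*)^*)$, and the missing membership $\mathcal{N}(\Sl^*)\subseteq\mathcal{D}(\Delta_t(\Sl^*)^*)$ cannot be obtained by testing against $\mathcal{E}_V$ alone, precisely because $\Delta_t(\Sl^*)$ turns out to be non-closable. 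Your abstract argument -- $\mathcal{N}(\Sl^*)=\mathcal{N}(|\Sl^*|)\subseteq\mathcal{N}(|\Sl^*|^t)$, then self-adjointness of $|\Sl^*|^t$ giving $\langle\Delta_t(\Sl^*)f,g\rangle=0$ for \emph{every} $f\in\mathcal{D}(\Delta_t(\Sl^*))$ -- closes exactly this point, and it proves more: $\mathcal{N}(T)\subseteq\mathcal{N}(\Delta_t(T)^*)$ for any closed densely defined $T$ whose $t$-Aluthge transform is densely defined, with no reference to trees or weights. So you use the same core computation for the hard inclusion, but a cleaner and more rigorous treatment of the easy one.
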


\begin{proof}
 By Theorem \ref{thm:DSl*}, $\mathcal{E}_V\subseteq \mathcal{D}(\Delta_t(\Sl^*))$ and obviously $\mathcal{E}_V$ is dense in $\ell^2(V)$. Moreover, since $\Chi^2(V_{\boldsymbol{\lambda}}^+)=V$, we have for every $v\in V$
 \begin{equation}
  \Delta_t(\Sl^*) e_v = \overline{\lambda_v} \frac{|\pi_{\pa(v)}|^2}{\mu_{\pa(v)}}\Sl e_{\pa^2(v)}.
  \label{eq:DtSl*}
  \end{equation}
 Let $v=(\ldots,0,v_{m(v)},\ldots,v_{M(v)},-1,\ldots)$. From \eqref{eq:lambda} and \eqref{eq:muv} we obtain
 \begin{align}
  \frac{\overline{\lambda_v}}{\mu_{\pa(v)}} &= \frac{2^{v_{m(v)}+\ldots+v_{M(v)-1}}}{v_{M(v)}+1}
     \frac{v_{M(v)-1}+1}{2^{v_{m(v)}+\ldots+v_{M(v)-2}+tv_{M(v)-1}}} =\notag\\
  &= \frac{v_{M(v)-1}+1}{v_{M(v)}+1} 2^{(1-t)v_{M(v)-1}}. \label{eq:frac}
 \end{align}
 In turn, by \eqref{eq:pi} and \eqref{eq:Sleu} we have
 \begin{align}
  |\pi_{\pa(v)}|^2 &= \frac{|\lambda_{\pa(v)}|^2}{\|\Sl e_{\pa^2(v)}\|^2} =\notag\\
  &= \frac {2^{2(v_{m(v)}+\ldots+v_{M(v)-2})}}{(v_{M(v)-1}+1)^2 2^{2(v_{m(v)}+\ldots+v_{M(v)-2})}\gamma^2}=\notag\\
  &= \frac1 {(v_{M(v)-1}+1)^2\gamma^2}. \label{eq:piparv}
 \end{align}
 Combining \eqref{eq:DtSl*}, \eqref{eq:frac} and \eqref{eq:piparv} leads to the equality
 \begin{equation*}
  \Delta_t(\Sl^*)e_v = \frac{2^{(1-t)v_{M(v)-1}}}{(v_{M(v)-1}+1)(v_{M(v)}+1)\gamma^2} \sum_{w\in\Chi(\pa^2(v))} \lambda_w e_w.
 \end{equation*}

 Let $f\in\mathcal{D}(\Delta_t(\Sl^*)^*)$. Then for any $v\in V$
 \begin{align}
  (\Delta_t(\Sl^*)^*f)(v) &= \left< \Delta_t(\Sl^*)^*f,e_v\right> = \left< f, \Delta_t(\Sl^*)e_v\right> =\notag\\
  &= \frac{2^{(1-t)v_{M(v)-1}}}{(v_{M(v)-1}+1)(v_{M(v)}+1)\gamma^2} \sum_{w\in\Chi(\pa^2(v))} \overline{\lambda_w} f(w)=\notag\\
  &= \frac{2^{(1-t)v_{M(v)-1}}}{(v_{M(v)-1}+1)(v_{M(v)}+1)\gamma^2} (\Sl^* f)(\pa^2(v)).
  \label{eq:DtSl**}
 \end{align}
 This gives the inclusion $\mathcal{N}(\Sl^*) \subseteq \mathcal{N}(\Delta_t(\Sl^*)^*)$. It suffices to show that $\mathcal{D}(\Delta_t(\Sl^*)^*)\subseteq \mathcal{N}(\Sl^*)$.

 Suppose there exists $f\in\mathcal{D}(\Delta_t(\Sl^*)^*)$ such that $\Sl^*f\neq0$. Let $$u=(\ldots,0,u_{m(u)},\ldots,u_{M(u)},-1,\ldots)\in V$$ be such that $(\Sl^*f)(u) \neq0$. Let $v^{(k)} = (\ldots,0,u_{m(u)},\ldots,u_{M(u)},k,0,-1,\ldots)$ for every $k\in\mathbb{Z}_+$. Then $\pa^2(v^{(k)})=u$ and
 \begin{align*}
  \|\Delta_t(\Sl^*)^*f\|^2 &\geq \sum_{k=0}^\infty |(\Delta_t(\Sl^*)^*f)(v^{(k)})|^2 =\\
  &\stackrel{(\ref{eq:DtSl**})}{=} \sum_{k=0}^\infty \frac{2^{2(1-t)k}}{(k+1)^2\gamma^2} \left|(\Sl^*f)(u)\right|^2 = \infty,
 \end{align*}
 because $t<1$. This is a contradiction. Thus $\Sl^*f=0$, which completes the proof.
\end{proof}

\begin{cor}
Operator $\Delta_t(\Sl^*)$ is not closable for any $t\in(0,1)$.
 \label{cor:closable}
\end{cor}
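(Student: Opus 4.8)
The plan is to deduce the corollary from the preceding Lemma together with the standard criterion that a densely defined operator is closable if and only if its adjoint is densely defined. The Lemma already supplies both ingredients needed to apply this criterion: it asserts that $\Delta_t(\Sl^*)$ is densely defined (so the criterion is applicable and $\Delta_t(\Sl^*)^*$ is well defined) and it computes $\mathcal{D}(\Delta_t(\Sl^*)^*) = \mathcal{N}(\Sl^*)$. Thus the whole matter reduces to checking that $\mathcal{N}(\Sl^*)$ fails to be dense in $\ell^2(V)$.

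To verify this, I would invoke the orthogonality relation $\mathcal{N}(\Sl^*) = \mathcal{R}(\Sl)^\perp$, which holds for any densely defined operator. Being an orthogonal complement, $\mathcal{N}(\Sl^*)$ is a closed subspace, so it is dense precisely when it coincides with all of $\ell^2(V)$, that is, precisely when $\mathcal{R}(\Sl) = \{0\}$. But Proposition \ref{prop:Sleu} gives $\|\Sl e_u\| > 0$, hence $\Sl e_u \neq 0$, for every $u\in V$, so $\mathcal{R}(\Sl) \neq \{0\}$. Consequently $\mathcal{N}(\Sl^*)$ is a proper closed subspace of $\ell^2(V)$ and therefore not dense.

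Combining these observations, $\mathcal{D}(\Delta_t(\Sl^*)^*) = \mathcal{N}(\Sl^*)$ is not dense, so $\Delta_t(\Sl^*)^*$ is not densely defined, and by the closability criterion $\Delta_t(\Sl^*)$ is not closable for any $t\in(0,1)$. The argument is essentially immediate once the Lemma is in hand; the only point requiring care is the correct citation of the closability criterion and of the relation $\mathcal{N}(\Sl^*) = \mathcal{R}(\Sl)^\perp$, both of which are standard facts about unbounded operators, so I do not anticipate any genuine obstacle here.
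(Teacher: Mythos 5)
Your proposal is correct and follows essentially the same route as the paper: both invoke the Lemma's identity $\mathcal{D}(\Delta_t(\Sl^*)^*)=\mathcal{N}(\Sl^*)$ together with the standard criterion that a densely defined operator is closable if and only if its adjoint is densely defined, and then observe that $\mathcal{N}(\Sl^*)$ is a proper closed subspace of $\ell^2(V)$. The only cosmetic difference is that you establish non-density via $\mathcal{N}(\Sl^*)=\mathcal{R}(\Sl)^\perp$ and $\Sl e_u\neq 0$, whereas the paper notes directly that $\Sl^*$ is a non-zero closed operator; these are interchangeable observations.
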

\begin{proof}
 Since $\Sl^*$ is a non-zero closed operator, $\mathcal{D}(\Delta_t(\Sl^*)^*)=\mathcal{N}(\Sl^*)$ is not dense in $\ell^2(V)$, which completes the proof.
\end{proof}

 By \cite[Lemma 4.3.1]{jjs3}, every weighted shift on a rootless directed tree with nonzero weights is unitarily equivalent to a composition operator in an $L^2$-space over a $\sigma$-finite measure. From this, together with Propositions \ref{prop:Sleu}, \ref{prop:Slhyp}, \ref{prop:trivial} and Corollary \ref{cor:closable}, we obtain the following theorem.

\begin{thm}
 There exists a hyponormal composition operator $C$ in an \mbox{$L^2$-space} over a $\sigma$-finite measure such that $\mathcal{D}(\Delta_t(C))=\{0\}$ for $t\in(0,1]$ and $\Delta_t(C^*)$ is not closable for $t\in(0,1)$.
\end{thm}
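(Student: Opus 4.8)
The plan is to transport the already-constructed counterexample on the directed tree $\mathfrak{T}=(V,E)$ to the setting of composition operators via the unitary equivalence cited from \cite[Lemma 4.3.1]{jjs3}. The essential point is that all the quantitative properties we care about---dense definition, injectivity, hyponormality, triviality of the Aluthge domain, and non-closability of the adjoint's Aluthge transform---are invariant under unitary equivalence, so it suffices to check that our $\Sl$ meets the hypotheses of the cited lemma and then to read off the conclusions.

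First I would verify the hypotheses of \cite[Lemma 4.3.1]{jjs3}: the directed tree $\mathfrak{T}=(V,E)$ defined in \eqref{eq:V}--\eqref{eq:E} is rootless (as already noted after its definition), and the weights $\boldsymbol{\lambda}$ given by \eqref{eq:lambda} are nonzero for every $v\in V^\circ$, since $\lambda_v$ is a ratio of a positive power of $2$ to $v_{M(v)}+1>0$. Thus the lemma applies and yields a unitary $W:\ell^2(V)\to L^2(\Omega,\mu)$, with $\mu$ a $\sigma$-finite measure, and a composition operator $C=C_\phi$ such that $C = W\Sl W^{-1}$. Next I would invoke the key structural fact that unitary equivalence commutes with the whole polar-decomposition machinery: if $\Sl=U|\Sl|$, then $C=(WUW^{-1})|C|$ with $|C|=W|\Sl|W^{-1}$, and this is the polar decomposition of $C$ by uniqueness. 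Consequently $\Delta_t(C)=W\,\Delta_t(\Sl)\,W^{-1}$ and, applying the same reasoning to the adjoints (using $C^*=WC^*W^{-1}$... more precisely $C^*=W\Sl^*W^{-1}$ since $W$ is unitary), also $\Delta_t(C^*)=W\,\Delta_t(\Sl^*)\,W^{-1}$.

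With these identifications in hand, each claimed property transfers mechanically. By Proposition~\ref{prop:Slhyp} the operator $\Sl$ is hyponormal, and since hyponormality is preserved under unitary equivalence (the defining inequality $\|Tf\|\ge\|T^*f\|$ and the domain inclusion $\mathcal{D}(T)\subseteq\mathcal{D}(T^*)$ are clearly unitarily invariant), $C$ is hyponormal. By Proposition~\ref{prop:trivial}, $\mathcal{D}(\Delta_t(\Sl))=\{0\}$ for every $t\in(0,1]$; applying $W$ gives $\mathcal{D}(\Delta_t(C))=W(\{0\})=\{0\}$ for every such $t$. Finally, by Corollary~\ref{cor:closable}, $\Delta_t(\Sl^*)$ is not closable for any $t\in(0,1)$; since closability is a property of the closure of the graph and $W\times W$ is a homeomorphism of $\ell^2(V)\times\ell^2(V)$ onto $L^2(\mu)\times L^2(\mu)$, the closure of $\Gamma(\Delta_t(C^*))$ fails to be a graph precisely when the closure of $\Gamma(\Delta_t(\Sl^*))$ does, so $\Delta_t(C^*)$ is not closable either.

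I do not anticipate a genuine obstacle here: the theorem is essentially a corollary obtained by pushing forward already-established facts through a unitary, and the only step requiring a moment's care is the bookkeeping that $W$ intertwines the polar decompositions and hence the Aluthge transforms of $\Sl$ and of $\Sl^*$ simultaneously. The mildest subtlety worth stating explicitly is that unitary equivalence respects the functional calculus, so that $|C|^s=W|\Sl|^sW^{-1}$ for all $s>0$; granting this, the identity $\Delta_t(C)=W\Delta_t(\Sl)W^{-1}$ and its adjoint analogue are immediate from the definition $\Delta_t(T)=|T|^tU|T|^{1-t}$, and the proof is complete.
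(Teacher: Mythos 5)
Your proposal is correct and follows exactly the paper's own route: the paper likewise invokes \cite[Lemma 4.3.1]{jjs3} (applicable because the tree is rootless and the weights \eqref{eq:lambda} are nonzero) and then transfers the conclusions of Propositions \ref{prop:Slhyp}, \ref{prop:trivial} and Corollary \ref{cor:closable} through the unitary equivalence. The only difference is that you spell out the unitary-invariance bookkeeping (intertwining of polar decompositions, functional calculus, and graphs) that the paper leaves implicit, which is a fine and harmless elaboration.
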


\begin{rem}
 For any $u\in V$ let $W:=\Des(u) = \bigcup_{n=0}^\infty \Chi^n (u)$ and let $\boldsymbol{\lambda}'=\{\lambda_v\}_{v\in W\setminus\{u\}}$. Then $S_{\boldsymbol{\lambda}'}$ is a weighted shift on a directed tree with root $u$. Moreover, $S_{\boldsymbol{\lambda}'}$ has all properties claimed for $\Sl$, i.e. $S_{\boldsymbol{\lambda}'}$ is densely defined, injective and hyponormal, its $t$-Aluthge transform has trivial domain for $t\in(0,1]$ and $t$-Aluthge transform of $S_{\boldsymbol{\lambda}'}^*$ is not closable for $t\in (0,1)$. These assertions can be shown by repeating the proofs of all results from this section with appropriate changes.
 \label{rem:root}
\end{rem}

It turns out that the tree given by \eqref{eq:V} and \eqref{eq:E} and the one described by Remark \ref{rem:root} are the only directed trees on which such an example can be constructed. This fact is stated in the following proposition.

\begin{prop}
 Let $\mathfrak{T}=(V,E)$ and $\lambda=\{\lambda_u\}_{u\in V^\circ}\subseteq\mathbb{C}\setminus\{0\}$. Suppose the weighted shift $\Sl$ is densely defined and $\mathcal{D}(\Delta_t(\Sl))=\{0\}$ for some $t\in(0,1]$. Then $\#\Chi(u) =\aleph_0$ for every $u\in V$.
\end{prop}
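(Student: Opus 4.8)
The plan is to argue by contradiction: I will show that if some vertex fails to have infinitely many children, then a single basis vector already lies in $\mathcal{D}(\Delta_t(\Sl))$, contradicting the hypothesis that this domain is trivial.

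First I would bracket $\#\Chi(u)$ from above. Since $\Sl$ is densely defined, Proposition \ref{prop:Sl} gives $e_u\in\mathcal{D}(\Sl)$ and hence $\sum_{v\in\Chi(u)}|\lambda_v|^2<\infty$ for every $u\in V$. Because the weights are all nonzero, this is a convergent series of strictly positive terms, so for each $n$ the set $\{v\in\Chi(u):|\lambda_v|^2>1/n\}$ is finite; taking the union over $n$ shows $\Chi(u)$ is at most countable, i.e. $\#\Chi(u)\le\aleph_0$. Thus the whole task reduces to excluding the possibility that $\Chi(u_0)$ is finite for some $u_0\in V$.

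Next I would invoke Theorem \ref{thm:DtSl}, which identifies $\mathcal{D}(\Delta_t(\Sl))=\mathcal{D}(S_{\boldsymbol{\mu}})\cap\mathcal{D}(|\Sl|^{1-t})$ with $\boldsymbol{\mu}$ given by \eqref{eq:mu}. Since $\lambda_v\neq0$ for every $v\in V^\circ$, one has $\pa(v)\in V_{\boldsymbol{\lambda}}^+$ for all $v$, so the top case of \eqref{eq:mu} always applies and $\mu_v=\|\Sl e_v\|^t\,\|\Sl e_{\pa(v)}\|^{-t}\lambda_v$; here $\|\Sl e_v\|<\infty$ (by $e_v\in\mathcal{D}(\Sl)$) and $\|\Sl e_{\pa(v)}\|^2\ge|\lambda_v|^2>0$, so each $\mu_v$ is a finite number. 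Now assume $\#\Chi(u_0)<\aleph_0$ and test the vector $e_{u_0}$. On one side, Proposition \ref{prop:mod} gives $e_{u_0}\in\mathcal{D}(|\Sl|^{1-t})$. On the other side, applying Proposition \ref{prop:Sl}(ii) to $S_{\boldsymbol{\mu}}$, membership $e_{u_0}\in\mathcal{D}(S_{\boldsymbol{\mu}})$ is equivalent to $\sum_{v\in\Chi(u_0)}|\mu_v|^2<\infty$; but this is a finite sum of finite terms, hence convergent. Therefore $e_{u_0}\in\mathcal{D}(\Delta_t(\Sl))$, and since $e_{u_0}\neq0$ this contradicts $\mathcal{D}(\Delta_t(\Sl))=\{0\}$.

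Combining the two bounds forces $\#\Chi(u)=\aleph_0$ for every $u\in V$, as claimed. I do not expect a genuine obstacle in this argument: the entire content is the observation that a finite child-set makes the defining series for $S_{\boldsymbol{\mu}}$ converge on the corresponding basis vector. The only points that require a little care are the "at most countable" step, which must use the nonvanishing of the weights, and the verification that each $\mu_v$ is finite and well defined, which again rests on $\lambda_v\neq0$ together with dense definedness of $\Sl$.
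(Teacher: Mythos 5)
Your proof is correct and is essentially the paper's own argument in contrapositive form: the paper notes $e_u\in\mathcal{D}(\Sl)\subseteq\mathcal{D}(|\Sl|^{1-t})$ and $e_u\notin\mathcal{D}(\Delta_t(\Sl))$, deduces $e_u\notin\mathcal{D}(S_{\boldsymbol{\mu}})$, hence $\sum_{v\in\Chi(u)}|\mu_v|^2=\infty$, which forces $\#\Chi(u)\geq\aleph_0$, while you run the same three facts in the reverse direction to get a contradiction from a finite child-set. The countability bound via nonzero weights is also identical to the paper's.
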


\begin{proof}
 Let $u\in V$.
 Due to Proposition \ref{prop:Sl}, $\overline{\mathcal{D}(\Sl)}=\ell^2(V)$ implies that for $u\in V$ we have $e_u\in\mathcal{D}(\Sl)$ and
 $$\|\Sl e_u\|^2 = \sum_{v\in\Chi(u)} |\lambda_v|^2.$$
 Since $|\lambda_v|^2>0$ for every $v\in\Chi(u)$ and the above series is convergent, it follows that $\#\Chi(u)\leq\aleph_0$.

 Let $t\in(0,1]$ be such that $e_u\notin \mathcal{D}(\Delta_t(\Sl))$. By Theorem \ref{thm:DtSl}, $\mathcal{D}(\Delta_t(\Sl))=\mathcal{D}(S_{\boldsymbol{\mu}})\cap\mathcal{D}(|\Sl|^{1-t})$, where $\mu$ is given by \eqref{eq:mu}. Since $e_u\in \mathcal{D}(\Sl)\subseteq\mathcal{D}(|\Sl|^{1-t})$, it follows that $e_u\notin \mathcal{D}(S_{\boldsymbol{\mu}})$. Hence
 \begin{equation*}
  \infty = \sum_{w\in V}\left(\sum_{v\in\Chi(w)} |\mu_v|^2\right)|e_u(w)|^2
  = \sum_{v\in\Chi(u)} |\mu_v|^2,
 \end{equation*}
 which is possible only if $\#\Chi(u)\geq\aleph_0$. This completes the proof.
\end{proof}

A similar result with $\Sl^2$ instead of $\Delta_t(\Sl)$ was obtained in \cite{jjs2}.

\subsection*{Acknowledgements}

I would like to thank my supervisor, prof. Jan Stochel for encouragement and motivation, as well as substantial help he provided me while working on this paper.

\end{document}